\newtheorem{thm}{Theorem}[section]
\newtheorem{lem}[thm]{Lemma}
\newtheorem{defn}[thm]{Definition}
\newtheorem{eg}[thm]{Example}
\newtheorem{conj}[thm]{Conjecture}
\newtheorem{cor}[thm]{Corollary}
\newtheorem{rmk}[thm]{Remark}
\newtheorem{prop}[thm]{Proposition}
\newcommand{\C}{\mathbb{C}}
\newcommand{\Z}{\mathbb{Z}}
\newcommand{\K}{\mathbb{K}}
\newcommand{\miu}[2]{\mu_{#1|1|#2}}
\newcommand{\mcA}{\mathcal{A}}
\newcommand{\mcB}{\mathcal{B}}
\newcommand{\mcM}{\mathcal{M}}
\newcommand{\mcF}{\mathcal{F}}
\newcommand{\mcY}{\mathcal{Y}}
\newcommand{\bimod}[2]{\mcY^l_{#1}\otimes_{\mathcal{\K}}\mcY^r_{#2}}
\newcommand{\lmod}[1]{\mcY^l_{#1}}
\newcommand{\rmod}[1]{\mcY^r_{#1}}
\newcommand{\gr}[1]{\mcG_{#1}}
\newcommand{\ta}[1]{\tau_{#1}}
\newcommand{\ti}[1]{\tau^{-1}_{#1}}
\newcommand{\bihom}[2]{hom_{\mcF-\mcF}(#1,#2)}
\newcommand{\biHom}[2]{Hom_{\mcF-\mcF}(#1,#2)}
\newcommand{\mcX}{\mathcal{X}}
\newcommand{\mcZ}{\mathcal{Z}}
\newcommand{\mcE}{\mathcal{E}}
\newcommand{\mcN}{\mathcal{N}}
\newcommand{\mcG}{\mathcal{G}}
\newcommand{\mcL}{\mathcal{L}}
\newcommand{\mcT}{\mathcal{T}}
\newcommand{\mcnN}[1]{\mathcal{N}^{#1}}
\newcommand{\bfA}{\mathbf{A}}
\newcommand{\bfB}{\mathbf{B}}
\newcommand{\nbf}[1]{\prescript{}{#1}{\mathbf{n}}^{r|1|s}}
\newcommand{\bfnn}[1]{\mathbf{n}_{#1}^{r|1|s}}
\newcommand{\bfnnn}[2]{ \prescript{}{#1}{\mathbf{n}}_{#2}^{r|1|s}}
\newcommand{\bfnnnz}[2]{ \prescript{}{#1}{\mathbf{n}}_{#2}^{0|1|0}}
\newcommand{\summand}{i_1\cdots i_k}
\newcommand{\lsubsummand}{i_{l'}\cdots i_k}
\newcommand{\rsubsummand}{i_1\cdots i_{k-l}}
\newcommand{\msubsummand}{i_1\cdots i_{l-1}i_{l'+1}\cdots i_k}
\newcommand{\secondsum}{1\leq i_1\leq\cdots i_k\leq n}
\newcommand{\lthirdsum}{0< l'< k}
\newcommand{\rthirdsum}{0< l< k}
\newcommand{\mthirdsum}{1<l<l'< k}
\newcommand{\elements}{c_1,\cdots,c_{m+1}}
\newcommand{\summandnew}{i_1\cdots i_m}
\newcommand{\subsummandnew}{j_1\cdots j_{m'}}
\def\bimodc{\mathrm{\!-\! mod\!-\!}}
\begin{document}
\title{A long exact sequence on the composition of Dehn twists}
\author{Shuo Zhang}
\maketitle

\begin{abstract}
We prove an exact sequence relating the Lagrangian Floer homology of a collection of Lagrangian spheres $\{L_i\}$ and the fixed-point
Floer homology of iterated Dehn twists along them, making progress toward a conjecture of Seidel. 

\end{abstract}

\section{Introduction}
It is a classical result that for an isolated hypersurface singularity $$f:\C^{n+1}\rightarrow \C$$ the nearby smooth fibers are homotopy equivalent to a bouquet of spheres $S^n$ called \textit{vanishing cycles}. Moreover, the monodromy around the singularity is a composition of Dehn twists along these vanishing cycles \cite{Arnold2012}. In fact, these structures are compatible with the Weinstein structure on the total space $\C^{n+1}$ (with some minor modifications). For example, the vanishing cycles are exact Lagrangian submanifolds, and the Dehn twists are exact symplectomorphisms. It is therefore natural to study their symplectic invariants, such as Lagrangian Floer homology, Fukaya categories, and fixed-point Floer homology.

 Given a finite collection of (monotone or exact) Lagrangian spheres $\{L_i\}$ in a (monotone or exact) symplectic manifold $(M,\omega)$, the (compact) Fukaya category $\mcB$ generated by $\{L_i\}$ is defined to have $\{L_i\}$ as objects and $cF(L_i,L_j)$ as morphisms. In the case of Lefschetz fibrations, the order of the vanishing cycles is also important, which motivates Seidel \cite{Sei00} to define the directed Fukaya category $\mcA$:
$$\operatorname{hom}_{\mathcal{A}}\left(L_j, L_k\right)= \begin{cases}CF(L_j, L_k) & j<k, \\ \mathbb{K} e_j & j=k, \\ 0 & j>k\end{cases}$$

In this paper, we work in a $\mathbb{Z}$-graded setting. Let $(M,\omega)$ be a symplectic manifold of dimension $2d$ with $2c_1(M,\omega)=0$ and $\{L_i\}$ be graded Lagrangian spheres. Seidel conjectured that the fixed-point Floer homology of the composed Dehn twists $\tau:=\ta{1}\circ\cdots\circ\ta{n}$ can be computed from the topology of $M$ and all of $\operatorname{hom}_{\mathcal{A}}\left(L_j, L_k\right)$. To be more precise:
\begin{conj}[{\autocite[Conjecture 3]{Sei06}}]\label{conjecture}
There is a long exact sequence:
\begin{align}\label{Seidel exact sequence}
\cdots \rightarrow H^*(D) \rightarrow HF^*(id) \rightarrow HF^*(\tau ) \rightarrow \ldots
\end{align}
Where $D$ is the following cochain complex:
\begin{align}\label{seidel seq}
    \bigoplus_{1\leq k\leq n}\bigoplus_{1\leq i_1< \cdots< i_k\leq n}CF(L_{i_k},L_{i_1})\otimes CF(L_{i_{k-1}}, L_{i_k})\cdots\otimes CF(L_{i_1}, L_{i_2})[d+n]
\end{align}
with a differential analogous to the Hochschild differential.
\end{conj}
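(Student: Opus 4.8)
The plan is to work with $A_\infty$-bimodules over the Fukaya category $\mcF=\mcF(M)$ generated by $\{L_i\}$, and to realize $HF^*(\tau)$ as a Hochschild homology. The input I would invoke from the open--closed formalism is that for any graded symplectomorphism $\phi$ there is a natural comparison map relating the Hochschild homology $HH_*(\mcF,\mcG_\phi)$ of the bimodule $\mcG_\phi$ attached to the graph $\Gamma_\phi\subset M^-\times M$ to $HF^*(\phi)$, and that it is an isomorphism onto the part of $HF^*(\phi)$ seen by $\{L_i\}$; getting all of $HF^*(\phi)$ requires $\{L_i\}$ to split-generate $\mcF(M)$, which is exactly the point at which only partial progress toward Conjecture \ref{conjecture} is available. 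For $\phi=\mathrm{id}$ the graph is the diagonal, $\mcG_{\mathrm{id}}=\diag$, and $HH_*(\mcF,\diag)\cong HF^*(\mathrm{id})$, the quantum cohomology of $M$ in degree $*-d$. So the problem reduces to an identity of bimodules presenting $\mcG_\tau$ as a mapping cone with $\diag$ as one vertex and a twisted complex with Hochschild homology $H^*(D)$ as the other; applying the exact functor $HH_*(\mcF,-)$ then delivers the long exact sequence.

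\textbf{Single twist.} Seidel's exact triangle in functorial form identifies $(\tau_{L_i})_*$ with the twist functor $T_{L_i}$; equivalently $\mcG_{\tau_{L_i}}\simeq\Cone{\bimod{L_i}{L_i}}{\diag}$, the cone on the evaluation morphism from the rank-one bimodule $\lmod{L_i}\otimes_\K\rmod{L_i}$ to $\diag$. I would take this, with its explicit structure maps $\lmiu{},\rmiu{},\miu{}{}$, as known (Seidel's book) or reprove it from the Lagrangian-surgery model of $\Gamma_{\tau_{L_i}}$. \textbf{Compositionality.} Next, show $\mcG_\tau\simeq\mcG_{\tau_{L_1}}\circ_\mcF\cdots\circ_\mcF\mcG_{\tau_{L_n}}$, i.e.\ that $\phi\mapsto\mcG_\phi$ carries composition of symplectomorphisms to convolution of $\mcF$--bimodules. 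This is a Wehrheim--Woodward-type geometric composition theorem, proved by figure-eight bubbling analysis; I expect this step, together with the open--closed comparison above, to be the principal analytic difficulty.

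\textbf{Expansion and Hochschild homology.} Substituting the cones of the previous step, using that $\diag$ is a unit for $\circ_\mcF$ together with the bar-resolution identity $\rmod{A}\ciotimes\lmod{B}\simeq CF(A,B)$, the iterated convolution becomes a twisted complex whose underlying bimodule is $\diag$ together with $\bigoplus_{1\le k\le n}\bigoplus_{i_1<\cdots<i_k}\lmod{L_{i_1}}\otimes_\K CF(L_{i_1},L_{i_2})\otimes_\K\cdots\otimes_\K CF(L_{i_{k-1}},L_{i_k})\otimes_\K\rmod{L_{i_k}}$ (with shifts), and whose twisted differential, assembled from the $A_\infty$ products and the module maps, is the bimodule analogue of the Hochschild differential. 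Thus $\mcG_\tau\simeq\Cone{\bfM}{\diag}$ with $\bfM$ the nonempty-subset part. Applying $HH_*(\mcF,-)$ gives $\cdots\to HH_*(\mcF,\bfM)\to HH_*(\mcF,\diag)\to HH_*(\mcF,\mcG_\tau)\to\cdots$; the middle term is $HF^*(\mathrm{id})$, the right term is $HF^*(\tau)$, and from $HH_*(\mcF,\lmod{A}\otimes_\K V\otimes_\K\rmod{B})\cong V\otimes_\K CF(B,A)$ the left term is the cohomology of $\bigoplus_{k\ge1}\bigoplus_{i_1<\cdots<i_k}CF(L_{i_k},L_{i_1})\otimes CF(L_{i_{k-1}},L_{i_k})\otimes\cdots\otimes CF(L_{i_1},L_{i_2})$ with the Hochschild-type differential, i.e.\ $H^*(D)$; the shifts ($d$ from the diagonal, one per cone) combine to the overall $[d+n]$. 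This is precisely \eqref{Seidel exact sequence}.

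\textbf{Main obstacle.} The hard part is twofold and both halves are analytic. First, the compositionality isomorphism $\mcG_\tau\simeq\mcG_{\tau_{L_1}}\circ_\mcF\cdots\circ_\mcF\mcG_{\tau_{L_n}}$ and the Cardy-type comparison $HF^*(\phi)\cong HH_*(\mcF,\mcG_\phi)$ rest on delicate degeneration and gluing arguments (figure-eight and disc bubbling), and the comparison is an isomorphism only as far as $\{L_i\}$ generates $\mcF(M)$ --- which is why one obtains progress toward, rather than a proof of, the conjecture in full. Second, on the algebraic side one must verify that the higher bimodule structure maps on the iterated cone of the twist bimodules assemble into exactly the claimed Hochschild-type differential, with consistent signs and gradings; this is routine in principle but is where the $[d+n]$ normalization and the precise shape of the differential get pinned down.
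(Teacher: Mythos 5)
Your outline is broadly sound and its overall architecture --- present $\mathfrak{G}_\tau$ as an iterated cone with $\mcF_\Delta$ at one vertex and a bar-type bimodule at the other, then apply an exact homological functor --- is the same as the paper's. You also correctly identify that the statement cannot be proved in full without a generation/nondegeneracy hypothesis; the paper likewise only obtains the long exact sequence as a corollary under the assumption that the compact Fukaya category is nondegenerate. But you reach the cone decomposition by a genuinely different route, and you substantially underestimate where the real work lies. First, the paper never proves or uses the convolution identity $\mathfrak{G}_\tau\simeq\mathfrak{G}_{\tau_1}\circ_\mcF\cdots\circ_\mcF\mathfrak{G}_{\tau_n}$, which you flag as a principal analytic difficulty: instead it takes the single-twist triangle $L_n\times L_n\to\Delta\to Gr(\tau_n^{-1})$ in the category of product Lagrangians and conjugates it by the autoequivalences $(\ta{1}\times id)_\#\circ\cdots\circ(\ta{i}\times id)_\#$, producing the triangles $\mcL_{i}\to\mathfrak{G}_{i-1}\to\mathfrak{G}_{i}$ directly and sidestepping geometric composition and figure-eight bubbling entirely. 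Second, the functor applied is $\biHom{\mcF_\Delta}{-}$ (together with fullness of the functor $\mcF^2\to\mcF\bimodc\mcF$ on the diagonal and graphs), not $HH_*(\mcF,-)$ with an open--closed/Cardy comparison; your route through Hochschild homology is legitimate but imports a different and independently nontrivial analytic input. Third, the step you call ``routine in principle'' --- identifying the twisted differential on the iterated cone with the explicit Hochschild-type differential --- is in fact the bulk of the paper: the connecting maps $ev'_i$ obtained from the conjugation construction are not explicit, and the paper must (a) guess explicit candidates $ev_i$, (b) prove they are closed by a lengthy case-by-case $A_\infty$ computation, and (c) show $[ev_i]=c[ev'_i]$ via a rigidity argument using $\biHomz{\mcL_{i+1}}{\mcG_i}\cong H_0(L_{i+1})\cong\K$, so that degree-zero homologically nontrivial closed morphisms are unique up to scalar. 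Without some substitute for this uniqueness argument, your expansion step does not actually pin down the differential on $D$, which is the content of the conjecture beyond the bare existence of a triangle.
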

\begin{rmk}
The differential has a more natural description, again due to Seidel.  Consider $\mcA$ and $\mcB$ as an $A_\infty$ algebra over the semi-simple ring $R=\K e_1\oplus\cdots\oplus \K e_n$ and $\mcA_+$ to be the kernel of the augmentation map $\mcA\rightarrow R$. Then (\ref{seidel seq}) is the chain complex 
\begin{align}
    (\mcB[d]\otimes_RT(\mcA_+[1])^{diag}
\end{align}
which is exactly $CC_*(\mcA,\mcB[d])$. See \autocite[Section 2.9]{Ganatra}
\end{rmk}

Let $\mcF:=Fuk(M)$ be the compact Fukaya category of $M$. In this paper, we construct an exact triangle in the category of $\mcF$-bimodules that we expect to give the exact sequence (\ref{Seidel exact sequence}) when we apply the bimodule $Hom$ functor.
We fixed the notations.  We use $\tau_i$ to denote $\tau_{L_i}$. The left and right Yoneda modules are denoted as $\mcY_X^l(Y):=\operatorname{hom}(X, Y)$  and $\mcY_X^r(Y):=\operatorname{hom}(Y, X)$ respectively. $\mcF_\Delta$ and $\mathfrak{G}_\tau$ are the diagonal and graph bimodule of $\tau$ respectively. All chain complexes are over $\Z/2$, though we expect everything to work over $\Z$.

The following is the main theorem. 
\begin{thm}[Main theorem]\label{mainmain}
Let $(M,\omega)$ be a symplectic manifold with $2c_1(M)=0$ with a collection of Lagrangian spheres $\{L_i\}$ such that their Floer cohomologies are well defined. There is an exact triangle of $Fuk(M)$-bimodules
$$\mcE_n\xrightarrow{\widetilde{ev}} \mcF_{\Delta} \rightarrow \mathfrak{G}_\tau$$
where  $$\mcE_n(A,B)=\bigoplus_{1\leq k\leq n}\bigoplus_{1\leq i_1< \cdots< i_k\leq n}CF(A,L_{i_1})\otimes\cdots\otimes CF(L_{i_k},B)[k-1]$$ is a bimodule version of the bar complex $T\mcA$. Its structure maps consists of all possible contractions by the $A_\infty$ operations $\mu_i$ and $\widetilde{ev}$ is the map 
\begin{align}
    \mu_{k+1}: CF(A,L_{i_1})\otimes\cdots\otimes CF(L_{i_k},B)[k-1] \rightarrow (A,B)
\end{align}
on each direct summand.
\end{thm}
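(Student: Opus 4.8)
The plan is to obtain the triangle by assembling $n$ copies of the single--Dehn--twist exact triangle and then collapsing the resulting iterated mapping cone to the small model $\mcE_n$. Two facts go in. First, the bimodule incarnation of Seidel's exact triangle: for each $i$ there is a quasi-isomorphism
\begin{align}
\mathfrak{G}_{\tau_i}\ \simeq\ \mathrm{Cone}\big(ev_i\colon \mcT_{L_i}\longrightarrow \mcF_\Delta\big),
\end{align}
where $\mcT_{L_i}$ is the rank-one $\mcF$-bimodule $(A,B)\mapsto CF(A,L_i)\otimes_{\K}CF(L_i,B)$ and $ev_i$ is evaluation, built from $\mu_2$ and the higher $\mu_k$. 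This is the $n=1$ case of the theorem; I will either cite it in the required generality or prove it first as the base of an induction. Second, the multiplicativity of graph bimodules, $\mathfrak{G}_{\tau_1\circ\cdots\circ\tau_n}\simeq\mathfrak{G}_{\tau_1}\ciotimes\cdots\ciotimes\mathfrak{G}_{\tau_n}$ in this order, which is a Yoneda computation once the conventions are fixed so that the composition order matches the ordering $i_1<\cdots<i_k$ appearing in $\mcE_n$. Throughout I will work with a homotopically flat model of $\mcF_\Delta$ (e.g.\ its bar resolution), so that $\ciotimes$ can be applied directly to these quasi-isomorphisms.

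Combining the two facts gives $\mathfrak{G}_\tau\simeq\mathrm{Cone}(ev_1)\ciotimes\cdots\ciotimes\mathrm{Cone}(ev_n)$. Because $\mcF_\Delta$ is the unit for $\ciotimes$, this $n$-fold tensor product of two-step complexes is a twisted complex of $\mcF$-bimodules whose underlying graded object is
\begin{align}
\bigoplus_{S=\{i_1<\cdots<i_k\}\subseteq\{1,\dots,n\}}\big(\mcT_{L_{i_1}}\ciotimes\cdots\ciotimes\mcT_{L_{i_k}}\big)[\,k\,],
\end{align}
the $S=\emptyset$ term being $\mcF_\Delta$ in degree $0$, with differential assembled from the internal (bar) differentials of the factors and from the maps $ev_i$ that replace the $i$-th factor by the unit. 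For each fixed $S$ the Yoneda deformation retraction collapses the interior tensor products over $\mcF$,
\begin{align}
\mcT_{L_{i_1}}\ciotimes\cdots\ciotimes\mcT_{L_{i_k}}\ \xrightarrow{\ \simeq\ }\ CF(-,L_{i_1})\otimes_{\K}CF(L_{i_1},L_{i_2})\otimes_{\K}\cdots\otimes_{\K}CF(L_{i_k},-),
\end{align}
whose target, after the shift, is precisely the $S$-summand of $\mcE_n$.

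It then remains to transport the twisted-complex differential along these retractions. By the homological perturbation lemma the transferred structure is exactly: on $\mcE_n$, the bar-type differential consisting of all contractions with the $A_\infty$ operations $\mu_j$ (including those merging two adjacent $CF$-factors, i.e.\ passing from an $S$-summand to an $S'$-summand with $S'\subsetneq S$); and, in the component landing in the $S=\emptyset$ piece, the map $\mu_{k+1}$ on the $S$-summand, which is $\widetilde{ev}$. One verifies separately, directly from the $A_\infty$ relations, that this $\widetilde{ev}$ is a closed bimodule morphism, which pins down the perturbed map up to homotopy. Hence $\mathfrak{G}_\tau\simeq\mathrm{Cone}(\widetilde{ev}\colon\mcE_n\to\mcF_\Delta)$, which is the claimed exact triangle. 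In practice it is probably cleanest to organize this as an induction on $n$: peel off $\tau_n$, write $\mathfrak{G}_\tau\simeq\mathfrak{G}_{\tau_1\circ\cdots\circ\tau_{n-1}}\ciotimes\mathfrak{G}_{\tau_n}$, feed in the inductive hypothesis and the $n=1$ case, and use that $\mcE_n$ splits into the summands with $n\notin S$ (which reproduce $\mcE_{n-1}$) and those with $n\in S$ (which reproduce $\mcE_{n-1}\ciotimes\mcT_{L_n}$ after collapse).

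The technical core of the argument is precisely this last transport step: running the perturbation lemma coherently over all subsets $S$ at once, and checking that the transferred structure maps are \emph{exactly} the $\mu_j$-contractions with the correct internal shifts --- and that the component into $\mcF_\Delta$ really is (a representative homotopic to) $\mu_{k+1}$. Arranging the Yoneda retractions to be compatible with the cone structure, so that a single coherent twisted complex is produced and then identified with $\mathrm{Cone}(\widetilde{ev})$, is where the bookkeeping demands care; the two inputs above are, by comparison, citations up to sign and orientation conventions.
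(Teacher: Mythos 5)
Your route is genuinely different from the paper's. The paper never tensors graph bimodules together: it starts from the single-twist triangle $\bimod{L_n}{L_n}\xrightarrow{ev}\mcF_\Delta\to\mathfrak{G}_{\tau_n}$, conjugates it by the autoequivalences $(\ta{1}\times id)_\#\circ\cdots\circ(\ta{i}\times id)_\#$ to get triangles $\mcL_{i+1}\xrightarrow{ev'_i}\mathfrak{G}_i\to\mathfrak{G}_{i+1}$ with $\mcL_{i+1}=\bimod{\ta{1}\cdots\ta{i}L_{i+1}}{L_{i+1}}$, and assembles these into an iterated cone $\mcG_n$ which is then re-associated into $Cone(\mcE_n\to\mcF_\Delta)$ by an elementary cone-rearrangement lemma. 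The price the paper pays is that the connecting maps $ev'_i$ produced this way are implicit; it replaces them by the explicit $\mu$-contraction maps $ev_i$ via a rigidity argument ($\biHom{\mcL_{i+1}}{\mcG_i}\cong H_*(L_{i+1})=H_*(S^d)$ is one-dimensional in degree $0$, so a closed, degree-zero, homologically nontrivial morphism is unique up to scalar and homotopy), plus a direct verification that $ev_i$ is closed. Your convolution-plus-perturbation scheme trades the conjugation step for the multiplicativity $\mathfrak{G}_{\tau_1\cdots\tau_n}\simeq\mathfrak{G}_{\tau_1}\ciotimes\cdots\ciotimes\mathfrak{G}_{\tau_n}$ and trades the explicit closedness computation for an HPL transfer; if carried out it would buy a cleaner inductive structure and would avoid the detour through $\mcF^2$ and the fibered Dehn twists, at the cost of needing a flat model for $\ciotimes$ and careful bookkeeping of the transferred higher structure maps.

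There is one genuine gap as written. The sentence asserting that verifying $\widetilde{ev}$ is closed ``pins down the perturbed map up to homotopy'' is a non sequitur: two closed degree-zero bimodule pre-morphisms with the same source and target need not be homotopic, so closedness of your candidate does not identify it with whatever the perturbation lemma actually outputs. You must either (a) carry out the HPL transfer explicitly and check that the output is literally the $\mu_j$-contractions and $\mu_{k+1}$ — this is the computation you defer, and it is comparable in length to the paper's Section on closedness of $ev_n$ — or (b) supply the same rigidity input the paper uses, namely that the relevant morphism space has one-dimensional degree-zero cohomology because the $L_i$ are spheres, so that any two closed, degree-zero, homologically nontrivial morphisms agree up to a nonzero scalar. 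Without one of these, the identification of the transferred differential with the stated structure maps of $\mcE_n$, and of the component into $\mcF_\Delta$ with $\mu_{k+1}$, is not established. A secondary point to nail down: the middle factors produced by collapsing $\mcT_{L_{i_1}}\ciotimes\cdots\ciotimes\mcT_{L_{i_k}}$ must come out in the orientation the theorem's $\mcE_n$ uses (the paper's summands are $CF(A,L_{i_1})\otimes CF(L_{i_2},L_{i_1})\otimes\cdots$), which constrains the order in which you convolve the $\mathfrak{G}_{\tau_i}$; this is a convention check, not an obstruction.
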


\begin{eg}
    When $n=1$, the exact triangle reduces to the exact triangle $$\bimod{L_1}{L_1}\xrightarrow{ev} \mcF_\Delta\rightarrow \mathfrak{G}_{\tau_1}$$
    which is the bimodule version of the exact triangle in \cite{Mak-Wu}
\end{eg}

\begin{rmk}
    We point out that Sikimeti Ma'u and Tim Perutz announced a proof of Conjecture \ref{conjecture} as early as 2011 using a direct approach via quilted Floer theory.  Although our approach also relies on M'au-Wehrheim-Woodward's Lagrangian correspondence theory, the main tool we use is the Lagrangian cobordism due to Biran and Cornea \cite{BC1}\cite{BC2}, as well as techniques from \cite{Mak-Wu}. It is also worth pointing out that the Biran and Cornea generalized their work to Lagrangian cobordism in Lefschetz fibrations \cite{BC3}.
\end{rmk}

To relate this exact triangle in the category of bimodules over $\mcF$, we consider the category $\mcF^*(M\times M^-)$ which is the pre-triangulated split closure of the compact Fukaya category of $M\times M^-$ together with the diagonal $\Delta$.  we use the subcategory $\mcF^2$ of the category $\mcF^*(M\times M^-)$ which is generated by products of compact Lagrangians.  
\begin{rmk}
    When $M$ is exact and therefore noncompact, the category $\mcF^*(M\times M^-)$ needs some clarification since the diagonal is not a compact Lagrangian. The morphism between a compact Lagrangian $A$ and a non-compact Lagrangian $B$ can be defined without difficulties. The morphism between two noncompact Lagrangians $(e.g. A=B=\Delta)$ is defined by perturbing $B$ using a small \textit{positive} Hamiltonian flow that is equal to the Reeb flow near the boundary.
\end{rmk}
It is an algebraic fact that the functor $\mathbf{M}:\mcF^2\rightarrow {\mcF}\bimodc{\mcF}$ that maps $A\times B$ to $\bimod{A}{B}$ is full \autocite[Proposition 1.2]{Ganatra}. When $M$ is nondegenerate, $\Delta$ (and hence the graph of any symplectomorphism) is also contained in $\mcF^2$, and $\mathbf{M}$ is full on these objects too. This implies:
\begin{align}
    \biHom{\mcF_\Delta}{\mcF_\Delta}&\cong HF(\Delta,\Delta,+) \cong HF(id,+)\\
    \biHom{\mcF_\Delta}{\mcF_\phi}&\cong HF(\Phi,+)\\
    \bihom{N\times N'}{\Delta}&\cong HF(N,N')
\end{align} 
\begin{rmk}
    We explain what the $\pm$ means. When $M$ is exact, which means that it is not compact, one has to perturb the symplectomorphism $\phi$ near the boundary to define $HF(\phi,\pm)$. This is achieved by picking a Hamiltonian $H$ such that $H_{|\partial M}\equiv 0$ and the Hamiltonian flow $\psi^H_t$ equal to the Reeb flow. Then the perturbation $\phi\circ\psi^H_t$ defines $HF(\phi,+)$ when $t>0$ and $HF(\phi,-)$ when $t<0$. When $M$ is non-exact and closed, then we can ignore the "$+$" and "$-$" as the Floer homologies do not depend on the perturbation.
\end{rmk}

Now taking $\biHom{\mcF_\Delta}{-}$ and the isomorphisms we have:
\begin{cor}\label{main conjecture}
If $M$ and $\{L_i\}$ are exact or monotone, and the compact Fukaya category is nondegenerate, then there's a long exact sequence

        \begin{align}
        \cdots \rightarrow  H^*(D) \rightarrow H F^*(id,+) \rightarrow  H F^*(\tau, +)\rightarrow \ldots
        \end{align}
    where $D$ is the chain complex $\biHom{\mcE_n}{\mcF_\Delta}$. See \autocite[Definition 2.18]{Ganatra}.
\end{cor}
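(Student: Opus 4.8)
The plan is to apply the bimodule morphism functor $\bihom{\mcF_{\Delta}}{-}$ to the exact triangle produced by Theorem \ref{mainmain},
$$\mcE_n\xrightarrow{\widetilde{ev}} \mcF_{\Delta}\rightarrow \mathfrak{G}_\tau\rightarrow \mcE_n[1],$$
and then identify the three resulting chain complexes. Since $\bihom{-}{-}$ is an exact functor in each variable on the triangulated category of $\mcF$-bimodules (it sends mapping cones to mapping cones), the triangle above goes to an exact triangle of chain complexes, and hence to a long exact sequence cyclically relating $H^*(\bihom{\mcF_{\Delta}}{\mcE_n})$, $\biHom{\mcF_{\Delta}}{\mcF_{\Delta}}$ and $\biHom{\mcF_{\Delta}}{\mathfrak{G}_\tau}$.

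For the middle and right-hand terms I would invoke the fullness of $\bfM\colon\mcF^2\rightarrow{\mcF}\bimodc{\mcF}$ from \autocite[Proposition 1.2]{Ganatra}. Under the nondegeneracy hypothesis both the diagonal $\Delta$ and the graph $\Gamma_\tau$ of $\tau$ lie in $\mcF^2$, with $\bfM(\Delta)=\mcF_{\Delta}$ and $\bfM(\Gamma_\tau)=\mathfrak{G}_\tau$, so fullness produces exactly the isomorphisms displayed before the corollary,
$$\biHom{\mcF_{\Delta}}{\mcF_{\Delta}}\cong HF(\Delta,\Delta,+)\cong HF(\mathrm{id},+),\qquad \biHom{\mcF_{\Delta}}{\mathfrak{G}_\tau}\cong HF(\Gamma_\tau,+)\cong HF(\tau,+).$$
Here one should fix once and for all the small positive Hamiltonian perturbation of $\Delta$ making $\bihom{\mcF_{\Delta}}{\mcF_{\Delta}}$ well-defined, so that the two ``$+$'' conventions are mutually consistent.

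It then remains to identify $\bihom{\mcF_{\Delta}}{\mcE_n}$ with the complex $D=\biHom{\mcE_n}{\mcF_{\Delta}}$, which by the remark following Conjecture \ref{conjecture} and \autocite[Definition 2.18]{Ganatra} is the Hochschild chain complex $CC_*(\mcA,\mcB[d])$. Since there are finitely many spheres, $\mcE_n$ is a finite iterated mapping cone, with the shifts $[k-1]$ and the finite-dimensional coefficient spaces $CF(L_{i_1},L_{i_2})\otimes\cdots\otimes CF(L_{i_{k-1}},L_{i_k})$, of Yoneda bimodules $\bimod{L_{i_k}}{L_{i_1}}=\bfM(L_{i_k}\times L_{i_1})$. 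So $\bihom{\mcF_{\Delta}}{-}$ can be evaluated summand by summand, using the standard computation of the Hochschild cochain complex of $\mcF$ with coefficients in a Yoneda bimodule -- equivalently, via fullness of $\bfM$, the $\mcF^2$-morphism complex from $\Delta$ to $L_{i_k}\times L_{i_1}$ (computing $HF(\Delta,L_{i_k}\times L_{i_1})$) -- together with the (weak) proper Calabi--Yau, or cyclic, structure on $\mcF$ to cast the Floer complex of each summand in the direction and degree in which it occurs in (\ref{seidel seq}). One then checks that the structure maps of $\mcE_n$ (the contractions by the $A_\infty$ operations $\mu_i$) and the evaluation map $\widetilde{ev}$ are carried to the summands of the Hochschild differential of $CC_*(\mcA,\mcB[d])$, giving a quasi-isomorphism $\bihom{\mcF_{\Delta}}{\mcE_n}\simeq D$. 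Feeding the three identifications into the long exact sequence above then gives the stated sequence.

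I expect the last step to be the main obstacle. Exactness of $\bihom{\mcF_{\Delta}}{-}$, fullness of $\bfM$, and the displayed isomorphisms are already in hand; the delicate point is that identifying $\bihom{\mcF_{\Delta}}{\mcE_n}$ with $D$ forces one to interchange the two arguments of $\bihom{-}{-}$ on the Yoneda summands, which requires Poincaré--Serre duality for the compact Fukaya category (for the sphere Lagrangians, $CF(X,Y)$ and $CF(Y,X)^{\vee}$ agree up to a degree shift by $d$) and hence a uniform shift that must be tracked carefully so as to recover the shift $[d+n]$ of Conjecture \ref{conjecture}. One must also verify that the resulting quasi-isomorphism intertwines the structure maps of $\mcE_n$ and $\widetilde{ev}$ with the Hochschild differential, and that the perturbation data chosen for the three terms are globally compatible.
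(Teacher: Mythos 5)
Your proposal follows the paper's argument exactly: the paper's entire proof of this corollary is to apply $\biHom{\mcF_\Delta}{-}$ to the exact triangle of Theorem \ref{mainmain} and invoke the displayed isomorphisms $\biHom{\mcF_\Delta}{\mcF_\Delta}\cong HF(id,+)$ and $\biHom{\mcF_\Delta}{\mathfrak{G}_\tau}\cong HF(\tau,+)$ supplied by fullness of $\bfM$ and nondegeneracy, which is precisely your first two paragraphs. The further identification of the first term with $CC_*(\mcA,\mcB[d])$ attempted in your last two paragraphs is not part of the corollary as stated (the paper defines $D$ to be the hom-complex itself and leaves the comparison with the complex of Conjecture \ref{conjecture} open), although your observation that the functor literally produces $\bihom{\mcF_\Delta}{\mcE_n}$ while the statement writes $\bihom{\mcE_n}{\mcF_\Delta}$ does flag a real imprecision in the paper's wording rather than a gap in your argument.
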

\begin{rmk}
    If in our definition of $\mcF^*(M\times M^-)$ we use the negative Hamiltonian to perturb the Lagrangians near infinity, we would get similar results with "$+$" replaced by "$-$"
\end{rmk}

Given two Lagrangians $N$ and $N'$, taking $\biHom{N\times N'}{-}$, we have the open version of the exact sequence. 
\begin{cor}\label{open version}
    Under the same hypothesis as Theorem \ref{mainmain}, for every pair of Lagrangians $N$ and $N'$ there is a long exact sequence
    \begin{align}
         \cdots \rightarrow H^*(\mcE_n(N,N')) \rightarrow H F^*(N,N') \rightarrow   HF^*(N,\tau(N') ) \rightarrow \ldots
    \end{align}
\end{cor}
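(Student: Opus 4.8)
The plan is to feed the exact triangle of $\mcF$-bimodules from Theorem~\ref{mainmain} into a cohomological functor and then recognize the three outputs. The functor I would use is $\bihom{\bimod{N}{N'}}{-}$; since $\bfM$ is full (\autocite[Proposition~1.2]{Ganatra}), on the objects appearing below this agrees with the hom-functor $\biHom{N\times N'}{-}$ on $\mcF^2$ used for Corollary~\ref{main conjecture}, which is what the remark preceding the statement has in mind. Applying this triangulated functor to
\[
\mcE_n\xrightarrow{\widetilde{ev}}\mcF_\Delta\rightarrow\mathfrak{G}_\tau
\]
yields an exact triangle of chain complexes, hence a long exact sequence
\[
\cdots\to H^*\!\bigl(\bihom{\bimod{N}{N'}}{\mcE_n}\bigr)\to H^*\!\bigl(\bihom{\bimod{N}{N'}}{\mcF_\Delta}\bigr)\to H^*\!\bigl(\bihom{\bimod{N}{N'}}{\mathfrak{G}_\tau}\bigr)\to\cdots.
\]
Equivalently, one can lift $\mcF_\Delta\to\mathfrak{G}_\tau$ to a morphism $\Delta\to\Gamma_\tau$ in $\mcF^2$ using fullness, form its cone there, note that $\bfM$ carries this cone to $\mcE_n$ up to shift, and then apply $\operatorname{hom}_{\mcF^2}(N\times N',-)\cong HF(N\times N',-)$.

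Next I would identify the three terms using the two-sided Yoneda lemma for $A_\infty$ bimodules (see \cite{Ganatra}): for any $\mcF$-bimodule $\mathcal{P}$ there is a quasi-isomorphism $\bihom{\bimod{N}{N'}}{\mathcal{P}}\simeq\mathcal{P}(N,N')$, \ie $\operatorname{hom}$ out of the Yoneda bimodule $\bimod{N}{N'}$ is evaluation at the pair $(N,N')$. Taking $\mathcal{P}=\mcF_\Delta$ gives $\mcF_\Delta(N,N')=CF(N,N')$, so the middle term is $HF^*(N,N')$. Taking $\mathcal{P}=\mathfrak{G}_\tau$ gives the graph bimodule evaluated at $(N,N')$, which by its definition is a model for $CF(N,\tau(N'))$ — with the same positive perturbation near infinity as in the earlier remarks when $M$ is exact — so the third term is $HF^*(N,\tau(N'))$. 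Taking $\mathcal{P}=\mcE_n$ reproduces the complex $\mcE_n(N,N')$ written out in Theorem~\ref{mainmain}, so the first term is $H^*(\mcE_n(N,N'))$. Substituting these identifications (with the gradings normalized as in the paper) converts the long exact sequence above into the one asserted in Corollary~\ref{open version}.

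I do not expect a serious obstacle here: the genuine content sits in Theorem~\ref{mainmain}, and Corollary~\ref{open version} is essentially its image under evaluation at $(N,N')$. The points that still need care are: (i) that the Yoneda identification applies with the required generality, which forces us to read \enquote{every pair of Lagrangians $N$ and $N'$} as \enquote{every pair for which the Floer complexes in sight and the Yoneda bimodule $\bimod{N}{N'}$ are defined} — compact Lagrangians, or admissible noncompact ones; (ii) pinning down $\mathfrak{G}_\tau$ precisely enough that $\mathfrak{G}_\tau(N,N')$ is literally $CF(N,\tau(N'))$ rather than, say, $CF(\tau(N),N')$ or a version built from $\tau^{-1}$, together with the matching of perturbation conventions, so that the last term of the sequence is exactly the Floer group named in the statement; and (iii), should one want the maps in the sequence — not just its exactness — to be the geometrically expected ones, keeping track of the naturality of the triangle under the functor, although this is not needed for the bare existence statement.
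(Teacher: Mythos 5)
Your proposal is correct and matches the paper's argument, which consists precisely of applying $\biHom{N\times N'}{-}$ to the exact triangle of Theorem~\ref{mainmain} and identifying the three terms via the fullness of $\bfM$ and the Yoneda-type isomorphisms listed before Corollary~\ref{main conjecture}. The caveats you flag (Yoneda identification, orientation/perturbation conventions for $\mathfrak{G}_\tau(N,N')$) are real but are left implicit in the paper as well.
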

\begin{rmk}\label{bigcomment}
Theorem \ref{mainmain} works for any symplectic manifolds $(X,\omega)$ and graded Lagrangian spheres $\{L_i\}$ that satisfy $2c_1(X,\omega)=0$ since our argument is purely algebraic and uses the $\Z$-grading (though when $d$ is odd a $\Z/2$ grading is enough).
It is possible to use our argument on the category of product Lagrangians directly (without going to the category of bimodules) to produce an object $E_n$ in $\mcF^2$ corresponding to the bimodule $\mcE_n$. However, the chain complex $CF(E_n,\Delta)$ is harder to work with than $\biHom{\mcE_n}{\mcF_\Delta}$. Also one needs to prove a Lagrangian surgery version of Lemma \ref{changecone0}.
\end{rmk}

\textbf{Acknowledgement} I would like to express my gratitude to my co-advisor, Weiwei Wu, for introducing this problem, and for innumerable helpful discussions and suggestions on both mathematics and writing. I also thank Erkao Bao, Octav Cornea, Mark Mclean, Cheuk Yu Mak, and Micheal Usher for their helpful comments on this draft; and my advisor, Tian-Jun Li, for his continuous support and encouragement.

\section{Proof}
First, we recall the definitions of diagonal bimodules, Yoneda modules, and algebraic twists in an $A_\infty$ category $\mcF$, as defined in \cite{Sei08}. For simplicity, we use $(A,B)$ to denote $\hom_\mcF(A,B)$ and $\mu_i$ to denote $\mu_\mcF^i$.
\begin{defn}
    The \textbf{diagonal bimodule} is defined as $\mcF_\Delta(A,B)=(A,B)$ with structure maps:

    \begin{align}
        \mu_\Delta^{k|1|s}:(A_r,A_{r-1})\otimes\cdots\otimes (A_0,B_0)\otimes\cdots\otimes(B_{s-1},B_s)\rightarrow (A_k,B_s)
    \end{align}
    given by $\mu_\Delta^{r|1|s}=\mu_{r+s+1}$
\end{defn}

\begin{defn}
    Given an object X, the \textbf{left Yoneda module} $\lmod{X}$ is defined as $\lmod{X}:=(-,X)$ with structure maps:
    \begin{align}
        \mu_X^{r|1}: (A_r,A_{r-1})\otimes\cdots\otimes(A_0,X) \rightarrow (A_r,X) 
    \end{align}
    given by $ \mu_X^{r|1}=\mu_{r+1}$
\end{defn}

\begin{defn}
    Given an object X, the \textbf{right Yoneda module} $\rmod{X}$ is defined as $\rmod{X}:=(X,-)$ with structure maps:
    \begin{align}
        \mu_X^{1|s}: (X,B_0)\otimes\cdots\otimes(B_{s-1},B_s) \rightarrow (X,B_s) 
    \end{align}
    given by $ \mu_X^{1|s}=\mu_{s+1}$
\end{defn}

\begin{defn}\label{product bimodule}
    Given a left module $\mcM$ and a right module $\mcN$, there's a bimodule $\mcM\otimes \mcN$ defined as $(\mcM\otimes \mcN)(A,B):=\mcM(A)\otimes \mcN({B})$ with structure maps:
    \begin{align}
        \mu_{\mcM\times\mcN}^{r|1|s}=\begin{cases}
            \mu_\mcM^{r|1}\otimes id & s=0, r>0\\
            id\otimes\mu_\mcN^{1|s} & r=0, s>0\\
            \mu_\mcM^{0|1}\otimes id+ id\otimes\mu_\mcN^{1|0} & r=0, s=0\\
            0 & r>0,s>0
        \end{cases}
    \end{align}
\end{defn}

\begin{defn}\label{abstract twist}
    Given a left module $\mcM$ and an object $X$, the \textbf{abstract twist} of $\mcM$ along $X$ is defined as $(T_X\mcM)(A):=\mcM(A)\oplus (A,X)\otimes\mcM(X)[1]$ with structure maps:
    \begin{align}
    \mu_{T_X\mcM}^{r|1}(a,b\otimes c)=(\mu_\mcM^{r|1}(a),\mu_\mcM^{r+1|1}(b,c)+b\otimes\mu_\mcM^{r|1}(c))
    \end{align}
\end{defn}

\begin{defn}
    Given any covariant $A_\infty$ functor $\phi:\mcF\rightarrow \mcF$, the graph bimodule $\mathfrak{G}_\tau$ is defined as $\gr{\phi}(A,B):=(\phi(B),A)$. The structure maps
\begin{align}
        \miu{k}{s}^{\phi}: ( A_{k-1},A_{k})\otimes...\otimes(\phi(B_0),A_0)\otimes...\otimes(B_{s},B_{s-1})\rightarrow (\phi(B_s),A_k)
\end{align}
are given by the composition
\begin{align}
\miu{k}{s}^\phi=\sum_{1\leq j\leq s}\mu_{k+s-j+2}\circ(\sum_{i_1+\cdots+i_j=j} id^{\otimes k+1}\otimes \phi_{i_1}\otimes\cdots\otimes \phi_{i_j})
\end{align}
\end{defn}

We can see that for the composed Dehn twist $\tau$, $\mathfrak{G}_\tau$ has a complicated structure map to perform calculations, since its structure maps involve the functor $\tau$. Here we construct using Lagrangian correspondences a bimodule that is quasi-isomorphic to $\mathfrak{G}_\tau$ and easier to work with.

The relation between Lagrangian correspondences $L_{01}\subset M_0\times M_1$ and functors from $H^0(Fuk(M_0))$ to $H^0(Fuk(M_1))$ was first written by Wehrheim and Woodward in \cite{WW1}. Then together with Ma'u they were able to enhance it to produce chain-level functors from $Fuk(M_0)$ to $Fuk(M_1)$ \cite{MWW}. Recall $\mcF^2$ is the subcategory of $Fuk(M\times M^-)$ generated by product Lagrangians and graphs of compactly supported exact symplectomorphism. From \cite{Mak-Wu} (also see \cite{Sei01}, \cite{Fuk17}, and \cite{Lek10}) we have an exact triangle in $\mcF^2$:

\begin{align}
    L_n\times L_n\rightarrow \Delta \rightarrow Gr(\tau_{n}^{-1})
\end{align}
Let $\mcF\bimodc\mcF$ be the category of $\mcF-\mcF$-bimodules. There's an $A_\infty$ functor $\Phi:\mcF^2\rightarrow \mcF\bimodc\mcF$. This functor is similar to the M'au-Wehrheim-Woodward functor in \cite{MWW}, except that the target is the category of bimodules instead of functors. It is also similar to the one constructed by Ganatra in \cite{Ganatra}, except that we are in the compact setting instead of the wrapped setting. The functor $\Phi$ takes the exact triangle above to the well-established exact triangle in $\mcF$-bimodule due to Seidel \cite{Sei08}:

\begin{align} \label{surgeryn}
        \bimod{L_n}{L_n}\xrightarrow{ev} \mcF_\Delta\rightarrow \mathfrak{G}_{\tau_n}
\end{align}
Where $ev:\bimod{L_n}{L_n}\rightarrow \mcF_\Delta$ is the map $ev_{r|1|s}:=\mu_{r+s+2}$.

There is an autoequivalence $(\ta{i}\times id)_\#$ on the subcategory of $\mcF$-bimodules generated by $\Phi(\mcF^2)$ that are induced by the symplectomorphisms $\ta{i}\times id$.  We define 
$$ev'_i:=(\ta{1}\times id\circ\cdots\circ \ta{i}\times id)_\#(ev)=(\ta{1}\times id)_\#\circ\cdots\circ (\ta{i}\times id)_\#(ev)$$
Applying $(\ta{1}\times id)_{\#}\circ\cdots\circ (\ta{n-1}\times id)_\#$ to (\ref{surgeryn}) we get an exact triangle:

\begin{align}\label{n-1}
        \bimod{\ta{1}\cdots\ta{n-1}L_n}{L_n}\xrightarrow{ev'_{n-1}} \mathfrak{G}_{\tau_{1}\cdots\tau_{n-1}}\rightarrow \mathfrak{G}_{\tau_{1}\cdots\tau_{n}}
\end{align}
 We introduce the following notation:
\begin{align}
    &\mathfrak{G}_i:=\mathfrak{G}_{\tau_{1}\cdots\tau_{i}}\\
    &\mcL_i:=\bimod{\ta{1}\cdots\ta{i-1}L_i}{L_i}
\end{align}
The structure maps of $\mcT_n:=\mcY^{l}_{\ta{1}\cdots\ta{n-1}L_n}$ can be computed easily using formulas for abstract twists in Definition \ref{abstract twist} which come from \cite{Sei08} and the fact that abstract twists of Yoneda modules are quasi-isomorphic to Dehn twists.

 Therefore by Definition \ref{product bimodule} the structure maps of $\mcL_i=\mcT_i\otimes \rmod{L_i}$ is given by
 \begin{align}
    \mu_{\mcL_i}^{r|1|s}=\begin{cases}
        \mu_{\mcT^i}^{r|1}\otimes id & s=0, r>0\\
            id\otimes\mu_{L_i}^{1|s} & r=0, s>0\\
            \mu_{\mcT_i}^{0|1}\otimes id+ id\otimes\mu_{L_i}^{1|0} & r=0, s=0\\
            0 & r>0,s>0
    \end{cases}
 \end{align}
Where $\mu_{\mcT_i}^{r|1}$ involves all possible contractions from the left.


We define 
\begin{align}
    \mcG_i&:=Cone(\mcL_i\xrightarrow{ev'_{i-1}} Cone( \mcL_{i-1}\xrightarrow{ev'_{i-2}} Cone(\cdots Cone(\mcL_1 \xrightarrow{ev} \Delta))))\\
    &=Cone(\mcL_i\xrightarrow{ev'_{i-1}} \mcG_{i-1})
\end{align}
Then we have from \eqref{n-1}
\begin{equation}\label{e:Gi}
    \mcG_i \simeq \mathfrak{G}_i
\end{equation}

 $\mcG_i$ is a more explicit model for $\mathfrak{G}_i$ since we can now describe the underlying vector spaces clearly:
\begin{align}\label{Gprime}
        \mcG_n(A,B)&=\bigoplus_{0\leq i\leq n-1} \mcL_{i+1}(A,B)[1]\\
        &=\bigoplus_{0\leq k\leq n}\bigoplus_{1\leq i_1< \cdots< i_k\leq n}CF(A,L_{i_1})\otimes\cdots\otimes CF(L_{i_k},B)[k]
    \end{align}

However, the structure maps of $\mcG_n$ are still implicit, since they include the maps $ev'_i$, whose formula is not explicit from the definition. In the following section, we will introduce a new collection of explicit maps $ev_i$ that represent $ev'_i$ up to multiplication by a nonzero constant. So we can write down the structure maps for $\mcG_n$ explicitly using $ev_i$.
\begin{rmk}
    In principle, it is possible to compute the formula for $ev'_n$ using Wehrheim-Woordward's result \cite{WWfiber} on the fibered Dehn twist $\tau_i\times id$
\end{rmk}

To describe the domain and codomain of the maps accurately we introduce the following notations for any bimodule $\mcB$: 
    \begin{align}
    &\mcB(\bfA_k,\bfB_s):=CF(A_k,A_{k+1})\otimes\cdots\otimes \mcB(A_0,B_0)\otimes\cdots\otimes CF(B_{s-1},B_{s}) \\
    &\mcnN{\summand}(A,B):=CF(A,L_{i_1})\otimes CF(L_{i_2},L_{i_1})\otimes\cdots\otimes CF(L_{i_r},B)\\
    &\nbf{\summand}:=\mu_{r+k+s+1}:\mcnN{\summand}(\bfA_k,\bfB_s)\rightarrow CF(A_k,B_s)\\
    &\bfnnn{\summand}{\summand}:=\sum_i id^{\otimes i}\otimes \mu_1\otimes id^{\otimes k-i}:\mcnN{\summand}(A_0,B_0)\rightarrow \mcnN{\summand}(A_0,B_0)\\
    &\bfnnn{\summand}{\rsubsummand}:=id\otimes\mu_{l+s+1}:\mcnN{\summand}(A_0,\bfB_s)\rightarrow\mcnN{\rsubsummand}(A_0,B_s)\\
    &\bfnnn{\summand}{\lsubsummand}:=\mu_{l'+k+1}\otimes id:\mcnN{\summand}(\bfA_k,B_0)\rightarrow\mcnN{\lsubsummand}(A_k,B_0)\\
    &\bfnnn{\summand}{\msubsummand}\\
    &:=id\otimes \mu_{l'-l+1}\otimes id:\mcnN{\summand}(A_0,B_0)\rightarrow\mcnN{\msubsummand}(A_0,B_0)
    \end{align}
    
    When $k\neq 0$, $\bfnn{\summand}$, $ \bfnnn{\summand}{\rsubsummand}$ and $\bfnnn{\summand}{\msubsummand}$ are $0$.
    When $s\neq0 $,  $\bfnn{\summand}$, $ \bfnnn{\summand}{\lsubsummand}$ and $\bfnnn{\summand}{\msubsummand}$ are $0$
\begin{defn}
    Under the identification of graded vector spaces 
    \begin{align}
        &\mcL_{i+1}(A, B)=\\
        &\bigoplus_{1\leq k\leq i+1}\bigoplus_{1\leq i_1< \cdots< i_k=i+1}\mcnN{\summand}(A,B)[k-1]\label{degree}
    \end{align}\\
    We define $ ev_i\in \bihom{\mcL_{i+1}}{\mcG_{\tau_1\cdots\tau_{i}}}$ as follows:
    \begin{align}\label{evn}
        \sum_{1<i\leq n+1} \sum_{1\leq i_1\leq\cdots\leq i_k=n+1}( (\sum_{\rthirdsum} \bfnnn{\summand}{\rsubsummand})+\nbf{\summand})
    \end{align}
\end{defn}
The following is the key proposition of our argument, which we will prove using induction in the next section.
\begin{prop}\label{Keyprop}
$[ev_i]=c[ev'_i]$ as elements in $ \biHom{\mcL_{i+1}}{\mcG_{i}}$ for some nonzero constants $c$.
\end{prop}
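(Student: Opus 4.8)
The plan is to prove Proposition \ref{Keyprop} by induction on $i$. The base case $i=1$ is the statement that $ev_1 = c\cdot ev$ for the map $ev$ in the exact triangle \eqref{surgeryn}; since $ev_{r|1|s} = \mu_{r+s+2}$ and our formula \eqref{evn} reduces, when $n=1$ (so $k=1$ and $i_1 = 1$), to exactly the operations $\mu_{r+s+2}$ contracting $CF(A,L_1)\otimes\cdots\otimes CF(L_1,B)$ into $(A,B)$ together with the internal differentials $\bfnnn{i_1}{\ } = \mu_1$, the two cocycles agree on the nose (one can take $c=1$). For the inductive step, I would assume $[ev_{i-1}] = c'[ev'_{i-1}]$ in $\biHom{\mcL_i}{\mcG_{i-1}}$ and deduce the statement for $i$. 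Recall $ev'_i = (\ta{i}\times id)_\#(ev'_{i-1})$ and, on the cone side, $\mcG_i = \Cone{\mcL_i}{\mcG_{i-1}}$ with the connecting map $ev'_{i-1}$, so there is a long exact sequence
\begin{align}
\cdots \rightarrow \biHom{\mcL_{i+1}}{\mcL_i}[-1] \xrightarrow{(ev'_{i-1})_*} \biHom{\mcL_{i+1}}{\mcG_{i-1}} \rightarrow \biHom{\mcL_{i+1}}{\mcG_i} \xrightarrow{\delta} \biHom{\mcL_{i+1}}{\mcL_i} \rightarrow \cdots
\end{align}
relating the group in which the identity $[ev_i]=c[ev'_i]$ lives to the one controlling the previous stage.

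The strategy is then a two-step comparison. First, I would show that the component of $ev_i$ lying in $\biHom{\mcL_{i+1}}{\mcL_i}$ (i.e. the image under $\delta$, which records the "off-diagonal" part of $ev_i$ that lands in the top cone summand $\mcL_i[1]$ of $\mcG_i$) is, up to a constant, the restriction-along-$\mu$ map built from $ev_{i-1}$; here I use that the leading term $\nbf{\summand}$ of \eqref{evn} contracts the two outermost inputs $CF(A,L_{i_1})$ and $CF(L_{i_k},B)$ with $\mu$, and the remaining chain $CF(L_{i_2},L_{i_1})\otimes\cdots\otimes CF(L_{i_{k-1}},L_{i_k})$ together with its lower operations is exactly $\mcnN{i_1\cdots i_{k-1}}$-type data governed by the $(i-1)$-stage map. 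This matches the fact that $ev'_i$, being a pushforward of $ev'_{i-1}$ under $(\ta{i}\times id)_\#$, has its top component equal to the corresponding pushforward of $ev'_{i-1}$, which by the inductive hypothesis is $c'[ev'_{i-1}]$ up to scale. Second, once the top components match up to a constant, exactness forces $[ev_i]$ and $c[ev'_i]$ to differ by an element pulled back from $\biHom{\mcL_{i+1}}{\mcG_{i-1}}$; I would then pin this ambiguity down by a degree/Yoneda argument — the relevant $\biHom$ group in the degree where $ev_i$ lives is generated (after passing to cohomology, using that $\mcG_{i-1}\simeq \mathfrak{G}_{i-1}$ and the bimodule $\mathbf{M}$-fullness already invoked in the introduction) by the geometric class $ev'_i$ itself, so any cocycle with the correct top component is forced to be a multiple of $ev'_i$.

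The main obstacle will be the bookkeeping in the first step: showing precisely that the explicit contraction formula \eqref{evn}, with all its sums over $\rthirdsum$ of the partial-contraction terms $\bfnnn{\summand}{\rsubsummand}$, is genuinely a \emph{cocycle} in $\bihom{\mcL_{i+1}}{\mcG_{i}}$ (this is an $A_\infty$ relation / Hochschild-type identity that must be checked by matching the $\mu$-quadratic relations against the cone differential of $\mcG_i$, using the structure maps of $\mcL_{i+1} = \mcT_{i+1}\otimes\rmod{L_{i+1}}$ recorded above and the definition of the graph/abstract-twist maps), and that its image under $\delta$ agrees up to a \emph{single} nonzero scalar with the inductively-known map — i.e. that no relative sign or combinatorial factor spoils the proportionality. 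I expect the cocycle check to follow from carefully organizing the terms by which inputs are contracted (left chain, right chain, middle, or outermost pair) and invoking the $A_\infty$ relations for $\mu$ together with the already-established relation $\mu_{\mcG_i} = \mu_{\mcL_i} + \mu_{\mcG_{i-1}} + ev'_{i-1}$-twist; the scalar-matching will come from comparing the single term $\nbf{i_1\cdots i_k}$ against the pushforward formula for $ev'_i$ on its top component, where the functor $\ta{i}\times id$ contributes a definite nonzero factor (ultimately traceable to the normalization of the exact triangle \eqref{surgeryn} and the Lagrangian-cobordism count behind it). The other steps — the long exact sequence, the degree estimate, and the Yoneda-type uniqueness — are comparatively formal given the results cited in the introduction.
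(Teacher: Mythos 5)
Your second step already contains the paper's entire mechanism, and your first step is both redundant and the shakiest part of the proposal. The paper never decomposes $ev_i$ into cone components nor runs the long exact sequence of the cone $\mcG_i=Cone(\mcL_i\to\mcG_{i-1})$. Instead it shows directly (Lemma \ref{morphism2}) that the chain of identifications
\begin{align*}
\biHomz{\mcL_{i+1}}{\mcG_i}\;\cong\; HF^0\bigl(\ta{1}\cdots\ta{i}(L_{i+1})\times L_{i+1},\,Gr(\ti{i}\cdots\ti{1})\bigr)\;\cong\; HF^0(L_{i+1},L_{i+1})\;\cong\; H^0(L_{i+1})\;\cong\;\K
\end{align*}
makes the degree-zero morphism space one-dimensional, so \emph{any} two closed, degree-zero, homologically nontrivial pre-morphisms $\mcL_{i+1}\to\mcG_i$ are automatically proportional. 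Once you grant this -- and it is exactly your assertion that the relevant group is ``generated by the geometric class $ev'_i$'' -- the top-component matching buys you nothing. Worse, as stated it is not well-founded: $ev'_i$ is a priori a morphism into $\mathfrak{G}_i$, and reading off its ``top component'' in the iterated cone $\mcG_i$ requires fixing the quasi-isomorphism \eqref{e:Gi}; the claim that the pushforward under $(\ta{i}\times id)_\#$ respects the cone decomposition does not follow from the definition of $ev'_i$. You should drop step one entirely and justify the one-dimensionality, which is where the sphere hypothesis and the fullness of $\mathbf{M}$ actually enter.

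What then remains -- and what your proposal defers as ``the main obstacle'' -- is precisely the substance of the paper's proof: (a) $ev_i$ has degree zero (a short count using the shifts in \eqref{degree}); (b) $ev_i$ is closed, which is the long case-by-case computation of Lemma \ref{keylemma}; and (c) $[ev_i]\neq 0$, which the paper obtains by restricting to the sub-bimodule $\bimod{L_{i+1}}{L_{i+1}}\subset\mcL_{i+1}$, where $ev_i$ restricts to the map $ev$ of \eqref{surgeryn}, already known to be homologically nontrivial; your proposal addresses (c) only through the problematic step one. On (b), there is a point you half-see but must make explicit: the structure maps of the target $\mcG_i$ involve the non-explicit maps $ev'_j$ for $j<i$, so the cocycle identity cannot even be written down concretely until the induction hypothesis is invoked to replace each $ev'_j$ by $ev_j$ (replacing a cone map by a nonzero multiple of a homotopic map does not change the quasi-isomorphism type of the cone). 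That replacement, not a comparison of top components, is the actual role of the induction in the argument.
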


\begin{cor}
 There is a quasi-isomorphism
\begin{align}\label{beforechange} 
      \mcG_n\simeq Cone(\mcL_i\xrightarrow{ev_{i-1}} Cone( \mcL_{i-1}\xrightarrow{ev_{i-2}} Cone(\cdots Cone(\mcL_1 \xrightarrow{ev} \Delta))))
\end{align}
 
\end{cor}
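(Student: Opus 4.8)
The real content has already been placed in Proposition~\ref{Keyprop}; the plan here is to deduce the corollary from it by modifying the tower of mapping cones that defines $\mcG_n$ one connecting map at a time, working from the innermost cone outward. Write $\grprime{i}$ for the bimodule obtained from the recursion $\mcG_i=\mathrm{Cone}(\mcL_i\xrightarrow{ev'_{i-1}}\mcG_{i-1})$ by replacing each $ev'_{j-1}$ with the explicit map $ev_{j-1}$ of \eqref{evn}; thus $\grprime{1}=\mathrm{Cone}(\mcL_1\xrightarrow{ev}\Delta)=\mcG_1$ (the innermost connecting map is already the explicit one, $ev=ev'_0=ev_0$), and $\grprime{n}$ is exactly the right-hand side of \eqref{beforechange}. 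The goal is a quasi-isomorphism $\grprime{n}\simeq\mcG_n$, which I would produce by induction on $n$ while carrying along a comparison quasi-isomorphism $\Psi_i\colon\grprime{i}\xrightarrow{\ \sim\ }\mcG_i$, with $\Psi_1=\mathrm{id}$. Two elementary facts about mapping cones of bimodule maps enter: if $f,g\colon X\to Y$ are closed and $f-g=\partial h$ for a homotopy $h$, then $\mathrm{Cone}(X\xrightarrow{f}Y)$ and $\mathrm{Cone}(X\xrightarrow{g}Y)$ are isomorphic by a unipotent map (the identity corrected by $h$); and, over a field, rescaling a connecting map by a nonzero constant also gives an isomorphism of cones. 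Proposition~\ref{Keyprop} provides precisely the relation $ev_i-c_i\,ev'_i=\partial h_i$ in $\bihom{\mcL_{i+1}}{\mcG_i}$, with $c_i=1$ over $\Z/2$, so that there $ev_i$ and $ev'_i$ are genuinely homotopic.

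For the inductive step, assume $\Psi_{i-1}\colon\grprime{i-1}\xrightarrow{\sim}\mcG_{i-1}$ is given. The graded vector space underlying $\grprime{i-1}$ coincides with the one underlying $\mcG_{i-1}$, both being described by \eqref{Gprime}, so formula \eqref{evn} defines a map $\mcL_i\to\grprime{i-1}$ of graded vector spaces; one first checks that it is a closed bimodule morphism into $\grprime{i-1}$ — an $A_\infty$-relation computation of the same type that underlies Proposition~\ref{Keyprop}, using that $\grprime{i-1}$ and $\mcG_{i-1}$ differ only through their connecting terms and that $\Psi_{i-1}$ intertwines the two dg structures. Granting this, $\grprime{i}=\mathrm{Cone}(\mcL_i\xrightarrow{ev_{i-1}}\grprime{i-1})\simeq\mathrm{Cone}(\mcL_i\xrightarrow{\Psi_{i-1}\circ ev_{i-1}}\mcG_{i-1})$ since a quasi-isomorphism of the target induces one of the cone. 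Next, because $\Psi_{i-1}$ is unipotent (the identity plus corrections assembled from the homotopies $h_j$ with $j<i-1$), the class of the closed morphism $\Psi_{i-1}\circ ev_{i-1}$ in $\biHom{\mcL_i}{\mcG_{i-1}}$ is identified with $[ev_{i-1}]=c_{i-1}[ev'_{i-1}]$ by Proposition~\ref{Keyprop}. By the two cone facts above this yields $\mathrm{Cone}(\mcL_i\xrightarrow{\Psi_{i-1}\circ ev_{i-1}}\mcG_{i-1})\cong\mathrm{Cone}(\mcL_i\xrightarrow{ev'_{i-1}}\mcG_{i-1})=\mcG_i$; composing these identifications defines $\Psi_i$, and taking $i=n$ gives \eqref{beforechange}.

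The step I expect to be the main obstacle is the bookkeeping underlying the inductive step: that the literal maps $ev_{i-1}$ of \eqref{evn} assemble, level by level, into a well-defined bimodule $\grprime{i}$ — equivalently, remain closed with respect to each rebuilt target — and that the comparison maps $\Psi_i$ can be kept concrete enough that $\Psi_{i-1}$ sends $ev_{i-1}$ to a representative of $c_{i-1}[ev'_{i-1}]$ rather than to some other class. I would make this precise using the filtration of $\mcG_n$ and of $\grprime{n}$ by the number of tensor factors appearing in \eqref{Gprime}: the $A_\infty$ operations, the evaluation formulas, and the homotopies $h_j$ all respect or strictly decrease this filtration, which forces the $\Psi_i$ to be unipotent for it and lets an induction on the filtration degree control the error terms. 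Over $\Z/2$ the constants $c_i$ all equal $1$ and the $\Psi_i$ become honest isomorphisms, so in fact $\grprime{n}\cong\mcG_n$ on the nose; the weaker quasi-isomorphism is the form of the statement that persists once one works over $\Z$ rather than $\Z/2$.
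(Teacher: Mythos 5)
Your argument is correct and matches the paper's (implicit) reasoning: the corollary is stated there without proof as an immediate consequence of Proposition~\ref{Keyprop}, via exactly the two standard cone facts you invoke, with the induction interleaved with the proof that each $ev_i$ is closed into the already-modified target. Your explicit attention to the changing codomains and the unipotent comparison maps $\Psi_i$ fills in bookkeeping the paper glosses over, but it is the same route.
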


The last step is to use the next lemma:


\begin{lem}\label{changecone0}
    Let $\mcX$, $\mcY$ and $\mcZ$ be $A_\infty$-bimodules, $g:\mcY\rightarrow\mcZ$ and $f:\mcX \rightarrow Cone(\mcY\xrightarrow{g}\mcZ)$ be bimodule morphisms. Then there exist unique maps $\tilde{f}:\mcX\rightarrow\mcY$ and $\tilde{g}:Cone(\mcX\xrightarrow{\tilde{f}}\mcY)\rightarrow \mcZ)$  such that this is a quasi-isomorphism:
    \begin{align}    \label{algebraic lemma}
        id:Cone(\mcX \xrightarrow{f} Cone(\mcY\xrightarrow{g}\mcZ))
    \cong Cone(Cone(\mcX[-1]\xrightarrow{\tilde{f}}\mcY)\xrightarrow{\tilde{g}} \mcZ)
    \end{align}

\end{lem}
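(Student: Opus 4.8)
The plan is to realize the asserted identity by an explicit chain of homotopy equivalences, built from the standard manipulations of iterated mapping cones of $A_\infty$-bimodule morphisms. The key observation is that $\mathrm{Cone}(\mcY \xrightarrow{g} \mcZ)$ comes with a canonical projection $\pi: \mathrm{Cone}(g) \to \mcY[1]$ (in the sense of a degree $+1$ closed bimodule map, i.e. a module map $\mcY[1] \to \mathrm{Cone}(g)[1]$, or equivalently a component of the differential) and a canonical inclusion $\iota: \mcZ \to \mathrm{Cone}(g)$. Given $f: \mcX \to \mathrm{Cone}(g)$, I would set $\tilde f := \pi \circ f : \mcX \to \mcY$ (shifted appropriately to $\mcX[-1] \to \mcY$), which is automatically a closed bimodule morphism since $\pi$ is. This is forced: projecting $f$ to the $\mcY$-component of the cone is the only natural candidate, which also explains the uniqueness claim.

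Next I would construct $\tilde g: \mathrm{Cone}(\mcX[-1] \xrightarrow{\tilde f} \mcY) \to \mcZ$. The underlying graded space of the left-hand cone $\mathrm{Cone}(\mcX \xrightarrow{f} \mathrm{Cone}(g))$ is $\mcX[1] \oplus \mcZ[1] \oplus \mcY[2]$ (reading $\mathrm{Cone}(g) = \mcZ \oplus \mcY[1]$ and shifting), while the right-hand side $\mathrm{Cone}(\mathrm{Cone}(\mcX[-1] \xrightarrow{\tilde f}\mcY) \xrightarrow{\tilde g}\mcZ)$ has underlying space $\mcX[1] \oplus \mcY[2] \oplus \mcZ[1]$ — literally the same graded bimodule up to reordering summands. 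So the map "$id$" in \eqref{algebraic lemma} makes sense on the nose as the identity of graded bimodules, and what must be checked is that it intertwines the two differentials once $\tilde g$ is chosen correctly. Writing the differential of the left cone as a $3\times 3$ upper-triangular-ish matrix (with entries: $\mu_\mcX$, $\mu_\mcY$, $\mu_\mcZ$ on the diagonal; $g$ connecting $\mcY \to \mcZ$; and the two components $f_\mcZ: \mcX \to \mcZ$, $f_\mcY = \tilde f: \mcX \to \mcY$ of $f$, plus possible higher homotopy components of $f$ since $f$ is an $A_\infty$-bimodule morphism, not a strict one), I would read off that $\tilde g$ must be $g$ on the $\mcY$-summand and $f_\mcZ$ (together with the higher components of $f$) on the $\mcX$-summand. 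One then verifies $\tilde g$ is a bimodule morphism and that the reordering map is a chain map — both are a matter of matching matrix entries, using that $f$ and $g$ were morphisms to begin with.

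The main subtlety — and the step I expect to cost the most care — is bookkeeping the higher-order components of the $A_\infty$-bimodule morphism $f$. In the strict (dg) setting everything is the trivial reshuffling above, but $f = \{f^{r|1|s}\}$ has components landing in both $\mcZ$ and $\mcY[1]$, and these must be correctly distributed between $\tilde f$ (which should only see the $\mcY$-projection) and $\tilde g$ (which absorbs the $\mcZ$-components as its $\mcX[-1]$-part). I would organize this by working with the cone as a genuine $A_\infty$-bimodule twisted complex and tracking the pre-module / pre-morphism structure maps degree by degree; the identity map being a \emph{strict} isomorphism of the underlying graded bimodules means I only need the structure maps to agree, not to produce a nontrivial $A_\infty$-quasi-isomorphism with higher terms, which simplifies matters considerably. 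Uniqueness of $\tilde f$ and $\tilde g$ follows because the projection $\pi$ and the identification of underlying spaces pin down every component. Finally, since the map is a strict iso of graded bimodules intertwining differentials, it is a fortiori a quasi-isomorphism, giving \eqref{algebraic lemma}.
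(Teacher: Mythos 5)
Your proposal is correct and follows essentially the same route as the paper: define $\tilde f$ as the $\mcY$-component of $f$ and $\tilde g$ as $g$ plus the $\mcZ$-component of $f$, identify the underlying graded bimodules, and match the structure maps entrywise (you are in fact slightly more explicit than the paper about tracking the higher $A_\infty$-components of $f$). The only quibble is a shift-bookkeeping slip — both sides have underlying space $\mcX[1]\oplus\mcY[1]\oplus\mcZ$, not $\mcX[1]\oplus\mcZ[1]\oplus\mcY[2]$ — which does not affect the argument.
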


\begin{proof}
    Let $X,Y,Z$ denote the underlying graded vector spaces of $\mcX, \mcY, \mcZ$ respectively. Clearly the underlying graded vector spaces of both side of (\ref{algebraic lemma}) agree and is equal to
    $$A:=X[1]\oplus Y[1] \oplus Z$$
    Under this identification of vector spaces, we have: 
   $$f=(f_1,f_2): X[1]\rightarrow Y[1]\oplus Z$$
    and  $$g:Y[1]\rightarrow Z$$
    Now define $$\tilde{f}:=f_1$$ $$\tilde{g}:=f_2+g$$
    We see $$d_A(x[1],y[1],z)=(d_Xx[1], d_Yy[1]+f_1x[1],d_Zz+f_2x[1]+gy[1])$$
    $$d_B(x[1],y[1],z)=(d_Xx[1],d_Yy[1]+\tilde{f}x[1],d_Zz+\tilde{g}(x[1],y[1]))$$
    $$=(d_Xx[1],d_Yy[1]+f_1x[1],d_Zz+f_2x[1]+gy[1])$$

    Indeed they agree. The compatibility with higher operations can be verified likewise. 
\end{proof}

Applying lemma \ref{changecone0} repeatedly on \eqref{beforechange} we can rearrange the iterated cone:
\begin{equation}\label{afterchange}\mcG_n\cong Cone(\cdots Cone(\mcL_n[-n+1]\xrightarrow{\widetilde{ev_{n-1}}} \cdots) \xrightarrow{\widetilde{ev_1}}\mcL_1) \xrightarrow{\widetilde{ev}} \Delta)
\end{equation}
where all of $\widetilde{ev_i}$ combined is essentially the same as all of $ev_i$ combined. For example, the component of $ev_{i_k-1}$ in (\ref{beforechange}) that maps $$CF(A,L_{i_1})\otimes...\otimes CF(L_{i_k},B)[k]$$ to $$CF(A,B)$$ is the same as the map $\widetilde{ev}$ in (\ref{afterchange}) restricted to $CF(A,L_{i_1})\otimes...\otimes CF(L_{i_k},B)$: They are both just $\mu_{k+1}$.

Now define $$\mcE_n:=Cone(\cdots Cone(\mcL_n[-n+1]\xrightarrow{\widetilde{ev_{n-1}}} \mcL_{n-1}[-n+2])\xrightarrow{\widetilde{ev_{n-2}}} \cdots) \xrightarrow{\widetilde{ev_1}}\mcL_1).$$
Its structure map is explicit since ${ev_i}$ and hence $\widetilde{ev_i}$ are explicit.
By construction, we have that $\mcE_n$ sits in an exact triangle
\begin{align}\label{mainCone}
    \mcE_n\xrightarrow{\widetilde{ev}} \Delta \rightarrow \gr{\ta{1}\cdots\ta{n}}
\end{align}
What's left to prove is Proposition \ref{Keyprop}, which we will prove in the next section using induction and a dimension argument.


\subsection{Proof of Proposition \ref{Keyprop} by induction}\label{degree section}

We explicitly describe the domain and codomain of $ev'_i$ first. Recall by (\ref{Gprime}):
\begin{defn}
    Let $\mcG_n$ be the bimodule whose underlying vector space is
    \begin{align}
        \mcG_n(A,B)&=\bigoplus_{0\leq i\leq n-1} \mcL_{i+1}(A,B)[1]\\
        &=\bigoplus_{0\leq k\leq n}\bigoplus_{1\leq i_1< \cdots< i_k\leq n}\mcnN{\summand}(A,B)[k]
    \end{align}
    where it is understood that when $k=0$, $\mcN^\emptyset(A,B)=CF(A,B)$. 
    Its structure maps is given by $A_\infty$ multiplications and $ev'_i$:
    \begin{align}
        &\sum_{0\leq k\leq n} \sum_{\secondsum}( \bfnn{\summand}
    +\sum_{\lthirdsum} \bfnnn{\summand}{\lsubsummand}
    + \sum_{\mthirdsum} \bfnnn{\summand}{\msubsummand})\label{structure terms}\\
    &+\sum_{0\leq k\leq n-1} ev'_{k}\label{morphism terms}
    \end{align}
    Where it is understood that when $k=0$, $ev'_0=ev$ is the map $ev:\bimod{L_1}{L_1}\rightarrow \mcF_\Delta$ in (\ref{surgeryn})
\end{defn}


\begin{defn}
    Under the identification of graded vector spaces 
    \begin{align}
        &\mcL_{i+1}(A, B)=\\
        &\bigoplus_{1\leq k\leq i+1}\bigoplus_{1\leq i_1< \cdots< i_k=i+1}CF(A,L_{i_1})\otimes\cdots\otimes CF( L_{i_k},B)[k-1]\label{degree2}
    \end{align}\\
    We define $ ev_i\in \bihom{\mcL_{i+1}}{\mcG_{\tau_1\cdots\tau_{i}}}$ as follows:
    \begin{align}\label{evn2}
        \sum_{1<i\leq n+1} \sum_{1\leq i_1\leq\cdots\leq i_k=n+1}( (\sum_{\rthirdsum} \bfnnn{\summand}{\rsubsummand})+\nbf{\summand})
    \end{align}
\end{defn}

Now we proceed by induction to prove proposition \ref{Keyprop}. The case when $i=1$ is exactly (\ref{surgeryn}). Assuming for $i<n$, $ev_i$ satisfy the hypothesis of the following lemma, then we can use the following dimension argument to conclude $[ev_i]=c[ev'_i]$ for $i<n$ for some nonzero constant $c$:

           

\begin{lem}\label{morphism2}
     Let $\theta\in hom_{\mcF-\mcF}(\mcL_{i+1},\mcG_i)$ be any bimodule \textit{pre-morphism} satisfying  
     \begin{enumerate}
            \item   $\theta$  of degree zero.
            \item  $\theta$  is closed (i.e it is a bimodule \textit{morphism}).
            \item  $[\theta]\neq 0$.
        \end{enumerate} then its class $[\theta]$ is sent under the isomorphism 
        \begin{align}
              &H_*(L_{i+1})\cong HF(L_{i+1},L_{i+1}) \cong HF(L_{i+1}\times L_{i+1},\Delta)\\
              &\cong HF(\tau_{1}\cdots\tau_{i}(L_{i+1})\times L_{i+1},Gr(\ti{i}\cdots\ti{1}))\\
              &\cong Hom_{\mcF-\mcF}(\mcL_{i+1},\mcG_i)\cong  Hom_{\mcF-\mcF}(\mcL_{i+1},\mcG_i)
        \end{align}
   
     to some nonzero elements in $H_0(L_i)$ 
\end{lem}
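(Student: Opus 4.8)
The plan is to reduce the statement to the single fact that the degree‑zero part $\biHomz{\mcL_{i+1}}{\mcG_i}$ is one‑dimensional. Granting this, a closed degree‑zero pre‑morphism $\theta$ with $[\theta]\neq 0$ must represent a generator of that group, and the inverse of the displayed chain of isomorphisms then carries it to a nonzero scalar multiple of the degree‑zero generator of $H_*(L_{i+1})$ — concretely the point class, equivalently the image of the unit $e_{L_{i+1}}$ under $H_*(L_{i+1})\cong HF^*(L_{i+1},L_{i+1})$. (If $HF^*(L_{i+1},L_{i+1})=0$ then hypothesis (3) cannot hold and there is nothing to prove, so we may assume $HF^*(L_{i+1},L_{i+1})\neq 0$.) Thus the real content is to produce the chain of isomorphisms and to verify that it is grading‑preserving, so that ``degree zero'' on one end matches ``degree zero'' on the other.

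For the chain I would treat the four links one at a time. The first, $H_*(L_{i+1})\cong HF^*(L_{i+1},L_{i+1})$, is the standard computation of the self‑Floer cohomology of a Lagrangian sphere: in the exact case there are no holomorphic discs so this is an equality of complexes, and in the monotone case Oh's spectral sequence degenerates; in either case $HF^*(L_{i+1},L_{i+1})\cong H^*(S^d;\K)$ additively, so its degree‑zero part is one‑dimensional and spanned by the unit. The second, $HF^*(L_{i+1},L_{i+1})\cong HF^*(L_{i+1}\times L_{i+1},\Delta)$, is the Ma'u--Wehrheim--Woodward Künneth isomorphism for the diagonal correspondence. The third is invariance of Floer cohomology under the ambient symplectomorphism $\ta{1}\cdots\ta{i}\times\mathrm{id}$ of $M\times M^-$, which sends $L_{i+1}\times L_{i+1}$ to $\ta{1}\cdots\ta{i}(L_{i+1})\times L_{i+1}$ and $\Delta$ to $Gr((\ta{1}\cdots\ta{i})^{-1})=Gr(\ti{i}\cdots\ti{1})$. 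The fourth is fullness of $\Phi$ (as for $\bfM$ in \autocite{Ganatra}, and here also faithful on products and graphs), which identifies the last Floer group with $\biHom{\bimod{\ta{1}\cdots\ta{i}(L_{i+1})}{L_{i+1}}}{\mathfrak{G}_i}$; rewriting the source as $\mcL_{i+1}$ and the target by $\mcG_i$ via the quasi‑isomorphism $\mcG_i\simeq\mathfrak{G}_i$ of \eqref{e:Gi} gives $\biHom{\mcL_{i+1}}{\mcG_i}$.

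To see that the composite preserves the $\Z$‑grading, I would identify it with the purely algebraic isomorphism furnished by the bimodule Yoneda lemma, $\biHom{\bimod{X}{Y}}{\mcB}\simeq\mcB(X,Y)$ for any $\mcF$‑bimodule $\mcB$, which is manifestly grading‑preserving. Applied here it gives $\biHom{\mcL_{i+1}}{\mcG_i}\simeq\mathfrak{G}_i(\ta{1}\cdots\ta{i}(L_{i+1}),L_{i+1})=\hom_\mcF(\ta{1}\cdots\ta{i}(L_{i+1}),\ta{1}\cdots\ta{i}(L_{i+1}))$, whose cohomology is $HF^*(\ta{1}\cdots\ta{i}(L_{i+1}),\ta{1}\cdots\ta{i}(L_{i+1}))\cong HF^*(L_{i+1},L_{i+1})\cong H^*(S^d)$, again concentrated in degrees $0$ and $d$ with one‑dimensional degree‑zero part. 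This also makes clear that the shifts $[k-1]$ and $[k]$ in the bar presentations of $\mcL_{i+1}$ and $\mcG_i$ are artifacts of those presentations and do not enter the count, since as abstract bimodules $\mcL_{i+1}\simeq\bimod{\ta{1}\cdots\ta{i}(L_{i+1})}{L_{i+1}}$ and $\mcG_i\simeq\mathfrak{G}_i$ carry no overall shift. Combining: a closed degree‑zero $\theta$ with $[\theta]\neq 0$ is a generator of the one‑dimensional group $\biHomz{\mcL_{i+1}}{\mcG_i}$, hence corresponds under the chain to a nonzero element of $H_0(L_{i+1})$.

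I expect the only genuine difficulty to be the grading bookkeeping across the MWW quilt isomorphism and the functor $\Phi$ — and pinning down $HF^*(L_{i+1},L_{i+1})\cong H^*(S^d)$ rather than merely knowing it is nonzero. The device that keeps this under control is the identification of the whole composite with the Yoneda isomorphism above, together with a check that the conventions agree on one explicit class, for instance the image of the unit $e_{L_{i+1}}$.
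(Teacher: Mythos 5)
Your proposal is correct and follows exactly the route the paper itself relies on (the paper states this lemma without a separate proof, appealing to the displayed chain of isomorphisms and the fact that the degree-zero part of $H_*(S^d)$ is one-dimensional); your justification of each link and the reduction to the bimodule Yoneda isomorphism $\biHom{\bimod{X}{Y}}{\mcB}\simeq H^*(\mcB(X,Y))$ is the right way to make the grading claim precise. The only quibble is the identification of the degree-zero generator with the ``point class'': under the convention that makes the composite degree-preserving the relevant generator is the unit $e_{L_{i+1}}$ (the fundamental class), but this does not affect the argument, which only needs that the composite is a degree-preserving isomorphism with one-dimensional degree-zero part.
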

\begin{rmk}
    Note that when $d$ is odd this argument works fine with just $\Z/2$ grading. See Remark \ref{bigcomment}
\end{rmk}
This is because $ev'_i$ also satisfies the hypothesis of Lemma \ref{morphism2} since it is the image of a degree preserving homologically nontrivial morphism under the fully faithful functor $(\ta{1}\times id\circ\cdots\circ \ta{i}\times id)_\#$.

Therefore, by the induction hypothesis, when we are proving that $ev_n$ satisfies the hypothesis of the above lemma, we can replace all the $ev'_i$ in the structure maps of $\mcG_n$ by $ev_i$. 
 
First of all, note that the degree of $ev_n$ is $0$ by construction. Indeed we have:
\begin{align}
    \prescript{}{i_1\cdots i_k}{\mathbf{n}}^{0|1|0}=\mu_{k+1}
\end{align}
which has degree $1-k$ and maps $$CF(A,L_{i_1})\otimes\cdots\otimes CF( L_{i_k},B)[k-1]$$ into $(A,B)$. Also
\begin{align}
    \prescript{}{i_1\cdots i_k}{\mathbf{n}}^{0|1|0}_{i_1\cdots i_{k-l}}=\mu_{l+1}
\end{align}
Which has degree $1-l$ and maps $$CF(A,L_{i_1})\otimes\cdots\otimes CF(L_{i_k},B)[k-1]$$
into
$$CF(A,L_{i_{k-l}})\otimes\cdots\otimes CF(L_{i_k},B)[k-l]$$

The proof that  $ev_n$ is closed is a long computation which we retain to the next section. 
Once we prove that $ev_i$ is closed. The fact that it represents a nontrivial class follows by using the following basic algebraic lemma on ${ev_i}$ restricting to the submodule $\bimod{L_{i+1}}{L_{i+1}}\subset \mcL_i$, we see the map after the restriction is $ev$, which we know induces nontrivial maps in homology from \cite{Mak-Wu}.
     
    \begin{lem}
        Let $f:\mcB\rightarrow \mcB'$ be a bimodule morphism. Suppose that there exists a submodule $\Bar{\mcB}$ of $\mcB$ such that $(f_{|{\Bar{\mcB}}})_*:H_*(\Bar{\mcB})\rightarrow H_*(\mcB')$ is nontrivial, then $f_*$ is nontrivial.
    \end{lem}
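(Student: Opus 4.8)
The plan is to derive this from the functoriality of the leading-term homology of $A_\infty$-bimodules. Recall that the first bimodule equation gives $\mu_\mcB^{0|1|0}\circ\mu_\mcB^{0|1|0}=0$, so for every pair of objects $(A,B)$ the space $\mcB(A,B)$ is a cochain complex with differential $\mu_\mcB^{0|1|0}$ and $H_*(\mcB)$ is defined; and for every bimodule morphism $g\colon\mcB_1\to\mcB_2$ the lowest morphism equation says the leading component $g^{0|1|0}\colon\mcB_1(A,B)\to\mcB_2(A,B)$ is a chain map, hence induces $g_*\colon H_*(\mcB_1)\to H_*(\mcB_2)$. The one fact needed is that $g\mapsto g_*$ respects composition, which holds because the leading component of a composite of bimodule morphisms is the composite of the leading components — every other term in the composition formula feeds a higher component of one factor into the other or inserts extra algebra inputs, and therefore contributes nothing to the $0|1|0$ part of the output.

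Granting this, the argument is short. A sub-bimodule $\Bar{\mcB}\subset\mcB$ is a choice of subspace $\Bar{\mcB}(A,B)\subset\mcB(A,B)$ stable under all the structure maps $\mu_\mcB^{r|1|s}$; hence the inclusion $\iota\colon\Bar{\mcB}\to\mcB$ is a strict bimodule morphism, and by definition $f|_{\Bar{\mcB}}=f\circ\iota$. By functoriality $(f|_{\Bar{\mcB}})_*=f_*\circ\iota_*$ as maps out of $H_*(\Bar{\mcB})$. If $f_*=0$ this composite vanishes, contradicting the hypothesis $(f|_{\Bar{\mcB}})_*\neq 0$; so $f_*\neq 0$.

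There is no real obstacle: the whole content is the routine bookkeeping that $\mcB\mapsto H_*(\mcB)$, $g\mapsto g_*$ is a functor and that sub-bimodule inclusions are strict. In the intended application one takes $\mcB=\mcL_{i+1}$, with $\Bar{\mcB}$ the $k=1$ summand $\bimod{L_{i+1}}{L_{i+1}}$ of \eqref{degree} — a genuine sub-bimodule because the abstract-twist structure maps only contract tensor factors and never create them — together with $\mcB'=\mcG_i$ and $f=ev_i$. On that summand $ev_i$ acts by $\mu_{r+s+2}$, so it agrees with the morphism $ev$ of \eqref{surgeryn}, which is homologically nontrivial by \cite{Mak-Wu}; the lemma then yields that $ev_i$ is homologically nontrivial, which is condition (3) in Lemma \ref{morphism2}.
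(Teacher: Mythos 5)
Your proof is correct; the paper states this lemma without proof as a routine algebraic fact, and your argument (factoring $f|_{\Bar{\mcB}}=f\circ\iota$ through the strict inclusion and invoking functoriality of $H_*$ on the leading $0|1|0$ components) is exactly the intended justification. The remarks identifying $\Bar{\mcB}=\bimod{L_{i+1}}{L_{i+1}}$ and matching $ev_i|_{\Bar{\mcB}}$ with $ev$ also agree with how the paper applies the lemma.
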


    \begin{prop}
    The maps $ev_i$ are homologically non-trivial.
    \end{prop}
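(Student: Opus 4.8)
The plan is to reduce the homological nontriviality of $ev_i$ to that of the Mak--Wu evaluation map $ev$ from \eqref{surgeryn} by restricting to the obvious sub-bimodule of $\mcL_{i+1}$ and then invoking the algebraic lemma above.

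First I would identify $\bimod{L_{i+1}}{L_{i+1}}$ as a sub-bimodule of $\mcL_{i+1}$. In the decomposition \eqref{degree2} it is exactly the length-one summand $k=1$, $i_1=i+1$; since $\mcL_{i+1}=\mcT_{i+1}\otimes\rmod{L_{i+1}}$ with $\mcT_{i+1}$ an iterated abstract twist (Definition \ref{abstract twist}) of the Yoneda module $\lmod{L_{i+1}}$, this summand sits at the bottom of the twist tower, and every structure map of $\mcL_{i+1}$ either preserves it or maps into it because the contraction-type maps strictly lower the length. Dually, the length-zero summand $\mcF_\Delta$ is the innermost sub-bimodule of the iterated cone defining $\mcG_i$, with a canonical inclusion $\iota\colon\mcF_\Delta\hookrightarrow\mcG_i$.

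Next I would read off $ev_i|_{\bimod{L_{i+1}}{L_{i+1}}}$ from the explicit formula \eqref{evn2}. On the length-one summand the terms $\bfnnn{\summand}{\rsubsummand}$ all vanish, since they require $0<l<k$ and hence $k\ge2$; the only surviving contributions are the diagonal terms $\nbf{\summand}$, which on a length-one summand are the operations $\mu_{r+s+2}$ -- precisely the components of the Mak--Wu map $ev$, landing in the length-zero summand $\mcF_\Delta$ of $\mcG_i$. Hence $ev_i|_{\bimod{L_{i+1}}{L_{i+1}}}=\iota\circ ev$.

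By the algebraic lemma just stated -- a null-homotopy of a bimodule morphism restricts along a sub-bimodule inclusion to a null-homotopy of the restriction, so $(ev_i)_*=0$ would force $(ev_i|_{\bimod{L_{i+1}}{L_{i+1}}})_*=0$ -- it suffices to show that $\iota\circ ev$ induces a nontrivial map on homology; then $(ev_i)_*\neq0$, so $ev_i$ is not null-homotopic and $[ev_i]\neq0$. Now $ev$ is homologically nontrivial by \cite{Mak-Wu}, being the evaluation map in the exact triangle $\bimod{L_{i+1}}{L_{i+1}}\xrightarrow{ev}\mcF_\Delta\to\mathfrak{G}_{\tau_{i+1}}$, so the remaining point is that $\iota$ does not annihilate its class. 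I would verify this by transporting $\iota\circ ev$ through the faithfulness isomorphisms of Lemma \ref{morphism2}: the autoequivalence induced by $(\tau_1\cdots\tau_i)^{-1}\times\mathrm{id}$ sends $Gr((\tau_1\cdots\tau_i)^{-1})$ to $\Delta$ and $L_{i+1}\times L_{i+1}$ to $(\tau_1\cdots\tau_i)^{-1}(L_{i+1})\times L_{i+1}$, under which $\iota\circ ev$ is identified, up to a nonzero scalar, with a Mak--Wu-type evaluation map and is therefore nontrivial; alternatively one runs the iterated long exact sequences $H_*(\mcL_j)\xrightarrow{(ev'_{j-1})_*}H_*(\mcG_{j-1})\to H_*(\mcG_j)$ to see that $\ker\iota_*$ misses the image of $ev_*$. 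I expect this last step -- pinning down $\ker\iota_*$, equivalently the geometric identification of $\iota\circ ev$ -- to be the main obstacle; the rest is formal bookkeeping with the explicit formulas.
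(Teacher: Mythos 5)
Your proof takes essentially the same route as the paper: restrict $ev_i$ to the sub-bimodule $\bimod{L_{i+1}}{L_{i+1}}$ (the length-one summand of $\mcL_{i+1}$), observe from the explicit formula that the restriction is the Mak--Wu evaluation map $ev$ landing in the $\mcF_\Delta$ summand of $\mcG_i$, and conclude via the sub-bimodule lemma and the nontriviality of $ev$ from \cite{Mak-Wu}. The one place you are more careful than the paper --- noting that the restriction is really $\iota\circ ev$ for the inclusion $\iota:\mcF_\Delta\hookrightarrow\mcG_i$, so one must still check that $\iota$ does not kill the class --- is a point the paper passes over silently, so the ``main obstacle'' you flag is a gap shared with (not a departure from) the paper's argument.
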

    \begin{proof}
    We just need to prove $(ev_i)^{0|1|0}$ induces a nontrivial map on homology. Note that $\bimod{L_{i+1}}{L_{i+1}}\subset \mcL_i$ is a sub-module and the restriction of $ev_i$ is the map $ev$ in (\ref{surgeryn}), which was shown to induce nontrivial map on homology
    \end{proof}



\subsection{$ev_i$ is closed}\label{computation section}
Recall by the argument using induction hypothesis in the last section, we can replace the maps $ev'_i$ in $\mu_{\mcG_n}$ by $ev_i$.  This allows us to prove that $ev_n$ is closed:
 
    \begin{lem}\label{keylemma} $ev_n$ is closed, that is, the following equality holds
        \begin{align}
        &\sum\limits_{i,j}\mu^{r-i|1|s-j}_{\mcG_n}\circ ev^{i|1|j}_n\\
        &=\sum\limits_{i,j}ev_n^{r-i|1|s-j}\circ id^{\otimes^{r-i}}\otimes\mu_{\mcL_{n+1}}^{i|1|j}\otimes id^{\otimes^{s-j}}\\
        &+\sum\limits_{i,j}ev_n^{r-i+1|1|s}\circ id^{\otimes^{r-i-j}}\otimes\mu^i_\mcA\otimes id^{\otimes^{s+j+1}}\\
        &+\sum\limits_{i,j}ev_n^{r|1|s-j+1}\circ id^{\otimes^{r+i+1}}\otimes\mu^j_\mcA\otimes id^{\otimes^{s-i-j}}
        \end{align}
    \end{lem}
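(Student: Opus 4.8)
The plan is to verify the $A_\infty$ bimodule morphism equation for $ev_n$ directly, by expanding both sides in terms of the explicit building blocks $\nbf{\summand}$ and $\bfnnn{\summand}{\rsubsummand}$ and matching terms. Since the induction hypothesis lets us replace every occurrence of $ev'_i$ in the structure maps $\mu_{\mcG_n}$ by the explicit $ev_i$, the entire identity becomes a statement about iterated $A_\infty$ multiplications $\mu_k$ applied to tensor products of Floer complexes $CF(A,L_{i_1})\otimes\cdots\otimes CF(L_{i_k},B)$, with insertions of the identity. So I would first fix a generator of $\mcL_{n+1}(\bfA_r,\bfB_s)$, i.e. a tensor word $a_r\otimes\cdots\otimes\bigl(CF(A,L_{i_1})\otimes\cdots\otimes CF(L_{i_k},B)\bigr)\otimes\cdots\otimes b_s$ with $i_k=n+1$, and systematically list which output summands of $\mcG_n$ can be reached — either the diagonal summand $CF(A_r,B_s)$, or a shorter word $CF(A_r,L_{i_{k-l+1}})\otimes\cdots\otimes CF(L_{i_k},B_s)$.

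The key step is a bookkeeping argument: every term appearing on either side is of the form $\mu_p(\ldots,\mu_q(\ldots),\ldots)$ — a composition of exactly two $A_\infty$ operations — acting on a consecutive block of the inputs, possibly with the $L$-chain broken in two places. These compositions are governed by the quadratic $A_\infty$ associativity relation $\sum \pm\,\mu_p(\mathrm{id}^{\otimes}\otimes\mu_q\otimes\mathrm{id}^{\otimes})=0$. I would organize the terms by \emph{where} the inner operation $\mu_q$ sits relative to the distinguished Floer input $CF(A_0,B_0)$ and relative to the $L$-chain. The three sums on the right-hand side of the lemma correspond precisely to: (i) $\mu_q$ landing inside the $\mcL_{n+1}$-block (the term $\mu_{\mcL_{n+1}}^{i|1|j}$, which itself unpacks via Definition \ref{product bimodule} into a left-contraction on $\mcT$, a right $\mu$ on the $\rmod{L_i}$ leg, or a $\mu_1$), (ii) $\mu_q$ among the $A$-inputs to the left, and (iii) $\mu_q$ among the $B$-inputs to the right. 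The left-hand side collects the terms where the \emph{outer} operation is the evaluation $\nbf{\cdot}$ or $\bfnnn{\cdot}{\cdot}$ and the inner one is a structure map of $\mcG_n$ (which after the substitution is again built from $\mu$'s, including the nested $ev_i$'s). The claim is then that, grouping by the underlying two-fold composition $\mu_p\circ\mu_q$, the terms cancel in pairs exactly as in the $A_\infty$ relation for $\mcF$ itself — the shifts $[k-1]$ and the restriction $i_k=n+1$ only affect signs (trivial over $\Z/2$) and which summands are nonzero.

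Concretely I would split into cases according to the output summand. If the output is the diagonal $CF(A_r,B_s)$, the relevant outer map is $\nbf{\summand}=\mu_{r+k+s+1}$, and the identity reduces to the plain $A_\infty$ relation for $\mu$ together with the fact that the pieces of $\mu_{\mcG_n}$ hitting the diagonal summand from a shorter $\mcL$-word are exactly the lower $ev_i=\nbf{}+\sum\bfnnn{}{}$ — so the "$ev$ then $\mu$" and "$\bfnnn{}{}$ then $\nbf{}$" contributions reassemble into the missing terms of associativity. If the output is a shorter word ending at $i_k=n+1$, the outer map is $\bfnnn{\summand}{\rsubsummand}=\mathrm{id}\otimes\mu_{l+s+1}$, and I pair it against the left-contraction $\mu_{\mcT}^{r|1}$ inside $\mu_{\mcL_{n+1}}$ and against $\mu_q$'s absorbed into one of the two $\mu$-factors; here the subtlety is that a single two-fold composition can be written either as (break the $L$-chain, then contract further) or (contract, then break) and these are the two members of a cancelling pair. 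The main obstacle I anticipate is purely combinatorial stamina: making sure the index ranges $\lthirdsum$, $\rthirdsum$, $\mthirdsum$ and the boundary cases $k=0$, $r=0$, $s=0$, $l=k$ are all accounted for without double-counting, and checking that the terms involving the \emph{nested} cone differentials of $\mcG_n$ (the maps $ev_i$ for $i<n$ appearing inside $\mu_{\mcG_n}$) contribute exactly the compositions $\nbf{}\circ\bfnnn{}{}$ and $\bfnnn{}{}\circ\bfnnn{}{}$ needed — i.e. that the explicit form \eqref{evn} of $ev_i$ is "closed under composition" in the right way. Once the case analysis is set up, each individual cancellation is an instance of the $A_\infty$ relation for $\mcF$, so no genuinely new input is needed beyond careful enumeration.
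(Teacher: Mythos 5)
Your proposal is correct and follows essentially the same route as the paper: invoke the induction hypothesis to replace $ev'_i$ by the explicit $ev_i$ inside $\mu_{\mcG_n}$, decompose the identity summand-by-summand according to the output index subset, and observe that each group of terms is either an identical pair (chain broken then contracted vs.\ contracted then broken) or an instance of the quadratic $A_\infty$ relation for $\mcF$, with the $r=0$, $s=0$ boundary cases handled by noting which building blocks vanish. The paper carries out exactly this case analysis (its Cases 0--3), so no further comparison is needed.
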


    
\begin{proof}

        We verify this for $r=s=0$ first, which is to prove the following equation: 
        \begin{align}\label{010 equation}
              \mu^{0|1|0}_{\mcG_n}\circ ev^{0|1|0}_n=ev_n^{0|1|0}\circ \mu^{0|1|0}_{\mcL_{n+1}}
        \end{align}
             
        We just need to prove this for any given pair of summands that is the domain and codomain. That is, given $1\leq \summandnew \leq n$ and a subset $\{\subsummandnew\}\subset\{\summandnew\}$, we just need to prove the LHS and RHS of (\ref{010 equation}) agree on each component: $$\prescript{}{\summandnew}(\mu^{0|1|0}_{\mcG_n}\circ ev^{0|1|0}_n)_{\subsummandnew}=\prescript{}{\summandnew}(ev_n^{0|1|0}\circ \mu^{0|1|0}_{\mcL_{n+1}})_{\subsummandnew}$$ that maps the component $\mcN^{\summandnew}$ to $\mcN^{\subsummandnew}\subset\mcG_n$.

        Fixing index set $\{\summandnew\}$, a subset $\{\subsummandnew\}$ and elements 
        $$(c_1,\cdots,c_{m+1})\in \mcN^{\summandnew}$$
        we compute 
        \begin{equation}\label{LHS}
            \prescript{}{\summandnew}(\mu^{0|1|0}_{\mcG_n}\circ ev^{0|1|0}_n)_{\subsummandnew}(c_1,\cdots,c_{m+1})
        \end{equation}
        and \begin{equation}\label{RHS}
            \prescript{}{\summandnew}(ev_n^{0|1|0}\circ \mu^{0|1|0}_{\mcL_{n+1}})_{\subsummandnew}(c_1,\cdots,c_{m+1})
        \end{equation}

        First of all, observe that both (\ref{RHS}) and (\ref{LHS}) vanish unless $\{\subsummandnew\}\subset\{\summandnew\}$ if of one of the following form:
        \begin{enumerate}
            \item $\summandnew =i_1\cdots i_p\subsummandnew i_q \cdots i_m$ for some $p\geq1$ and $q\leq m$.
            \item $\summandnew=j_1\cdots j_p i_r\cdots i_{r+l} j_{p+1}\cdots j_l i_{q}\cdots i_m$ for some $p\geq 1,r>1,l\geq 0,q\leq m$.
            \item $\summandnew=\subsummandnew i_q\cdots i_m$ for some $q$.
            
        \end{enumerate}
        since $ev_i$ only involves $A_\infty$ multiplication from the right and $\mu_{\mcL_i}$ only involves $A_\infty$ multiplication from the left. Therefore, for all other types of ordered subsets $\subsummandnew$, there are no terms in $\mu^{0|1|0}_{\mcG_n}\circ ev^{0|1|0}_n$ or $ev_n^{0|1|0}\circ \mu^{0|1|0}_{\mcL_{n+1}}$ that map $\mcN^{\summandnew}$ to $\mcN^{\subsummandnew}$.

        Case 0: The empty subset corresponds to the summand $\mcN^{\emptyset}=\Delta$. In this case $\mu^{0|1|0}_{\mcG_n}\circ ev^{0|1|0}_n=ev_n^{0|1|0}\circ \mu^{0|1|0}_{\mcL_{n+1}}$ is equivalent to the $A_\infty$ relation of order $m+2$. 

        Case 1: 
        \begin{align}
            &\mu^{0|1|0}_{\mcG_n}\circ ev^{0|1|0}_n(c_1,\cdots,c_{m+1})\\
            =&(\bfnnnz{i_1\cdots i_{q-1}}{i_{p+1}\cdots i_{q-1}}\circ \bfnnnz{i_1\cdots i_m}{i_1\cdots i_{q-1}})(c_1,\cdots,c_{m+1})\\
            =&(\mu_{p+1}(c_1,\cdots,c_{p+1}),\cdots,\mu_{m-q+2}(c_{q},\cdots,c_{m+1}))
        \end{align} 

        \begin{align}
             &ev_n^{0|1|0}\circ \mu^{0|1|0}_{\mcL_{n+1}}(\elements)\\
             =&(\bfnnnz{i_{p+1}\cdots i_{m}}{i_{p+1}\cdots i_{q-1}}\circ \bfnnnz{i_1\cdots i_m}{i_{p+1}\cdots i_{m}})(\elements)\\
             =&(\mu_{p+1}(c_1,\cdots,c_{p+1}),\cdots,\mu_{m-q+2}(c_{q},\cdots,c_{m+1}))
        \end{align}
        This finishes the verification for Case 1.

        Case 2: 
        \begin{align}
            &\mu^{0|1|0}_{\mcG_n}\circ ev^{0|1|0}_n(\elements)\\
            =&(\bfnnnz{i_1\cdots i_{q-1}}{i_1\cdots i_{r}i_{r+l}\cdots i_{q-1}}\circ \bfnnnz{i_1\cdots i_m}{i_1\cdots i_{q-1}})(\elements)\\
            =&(\cdots,\mu_{l+2}(c_{p+1},\cdots,c_{p+l+2}),\cdots,\mu_{m-q+2}(c_q,\cdots,c_{m+1}))
        \end{align}

         \begin{align}
           &ev_n^{0|1|0}\circ \mu^{0|1|0}_{\mcL_{n+1}}(\elements)\\
           =&(\bfnnn{i_1\cdots i_{r}i_{r+l}\cdots i_m}{i_1\cdots i_{r}i_{r+l}\cdots i_{q-1}}\circ \bfnnn{i_1\cdots i_m}{i_1\cdots i_{r}i_{r+l}\cdots i_{m}})(\elements)\\
           =&(\cdots,\mu_{l+2}(c_{p+1},\cdots,c_{p+l+2}),\cdots,\mu_{m-q+2}(c_q,\cdots,c_{m+1}))
        \end{align}
        This finishes verification for Case 2.
        

         Case 3: \begin{align}\label{case3 1}
             &\mu^{0|1|0}_{\mcG_n}\circ ev^{0|1|0}_n(\elements)\\
             =&\sum_{0<l<m-q+1}\bfnnnz{i_1\cdots i_{m-l}}{i_1\cdots i_{q-1}}\circ \bfnnnz{i_1\cdots i_m}{i_1\cdots i_{m-l}}(\elements)\\
             &+\bfnnnz{i_1\cdots i_{q-1}}{i_1\cdots i_{q-1}}\circ\bfnnnz{\summand}{i_1\cdots i_{q-1}}\\
             =&\sum_{i<q} (c_1,\cdots,\mu_1(c_i),\cdots,\mu_{m-q+2}(c_q,\cdots,c_{m+1})\\
             &+(c_1,\cdots,\mu_1(\mu_{m-q+2}(c_q,\cdots, c_i,\cdots,c_{m+1})))\\
             &+\sum_{0<l<m-q+1} (c_1,\cdots,\mu_{m-l-q+2}(c_q,\cdots,\mu_{l+1}(c_{m-l+1},\cdots,c_{m+1})))
         \end{align}
         
         \begin{align}
             &ev_n^{0|1|0}\circ \mu^{0|1|0}_{\mcL_{n+1}}(\elements)\\
             =&\bfnnnz{i_1\cdots i_{m}}{i_1\cdots i_{q-1}}\circ \bfnnnz{i_1\cdots i_m}{i_1\cdots  i_{m}}(\elements)\label{case3 2}\\
                &+\sum_l\bfnnnz{i_1\cdots i_{r}i_{r+l}\cdots i_{m}}{i_1\cdots i_{q-1}}\circ \bfnnnz{i_1\cdots i_m}{i_1\cdots i_{r}i_{r+l}\cdots i_{m}}(\elements) \label{case3 3}\\
            =&\sum_{i<q} (c_1,\cdots,\mu_1(c_i),\cdots,\mu_{m-q+2}(c_q,\cdots,c_{m+1})\\
            &+\sum_{i\geq q} (c_1,\cdots,\mu_{m-q+2}(c_q,\cdots, \mu_1(c_i),\cdots,c_{m+1})\\
            &+\sum_l (c_1,\cdots,\mu_{m-l-q+1}(c_q,\cdots,\mu_{l+2}(\cdots),\cdots,c_{m+1}))
         \end{align} 
         We see that both sides agree by the $A_\infty$ relations of order $m-q+3$.

         For the case when $r>0$ and $s>0$ we proceed similarly. First, we list all the cases for each of the three maps $\mu^{r|1|s}_{\mcG_n}, \mu^{r|1|s}_{\mcL_{n+1}}$ and $ev^{r|1|s}_n$:

         \begin{align}\label{ev r1s}
            ev^{r|1|s}_n=\begin{cases}
                \sum\limits_{0\leq k \leq n+1}\sum\limits_{1\leq i_1\cdots i_k=n+1} \nbf{\summand}, r\neq 0, s\neq 0\\
                 \sum\limits_{0\leq k \leq n+1}\sum\limits_{1\leq i_1\cdots i_k=n+1} \nbf{\summand}, r\neq 0, s= 0\\
                  \sum\limits_{0\leq k \leq n+1}\sum\limits_{1\leq i_1\cdots i_k=n+1} (\nbf{\summand}+ \sum\limits_{0<l<k}\bfnnn{\summand}{i_1\cdots i_{k-l}}), r= 0, s\neq 0\\         
            \end{cases}
         \end{align}
          \begin{align}\label{mcg r1s}
            \mu^{r|1|s}_{\mcL_{n+1}}=\begin{cases}
                0, r\neq 0, s\neq 0\\
                  \sum\limits_{0\leq k\leq n+1}\sum\limits_{1\leq i_1\cdots i_k\leq n+1}\sum\limits_{0\leq l<k} \bfnnn{\summand}{i_l\cdots i_{k}}, r\neq 0, s= 0\\
                 \sum\limits_{0\leq k\leq n+1}\sum\limits_{1\leq i_1\cdots i_k\leq n+1} \bfnnn{\summand}{\summand},  r= 0, s\neq 0\\
            \end{cases}
         \end{align}
          \begin{align}\label{mce r1s}
            \mu^{r|1|s}_{\mcE_{n}}=\begin{cases}
                 \sum\limits_{0\leq k\leq n}\sum\limits_{1\leq i_1\cdots i_k\leq n} \nbf{\summand}, r\neq 0, s\neq 0\\
                  \sum\limits_{0\leq k\leq n}\sum\limits_{1\leq i_1\cdots i_k\leq n}(\sum\limits_{0\leq l<k} \bfnnn{\summand}{i_l\cdots i_{k}} + \nbf{\summand}), r\neq 0, s= 0\\
                 \sum\limits_{0\leq k\leq n}\sum\limits_{1\leq i_1\cdots i_k\leq n} ( \sum\limits_{0\leq l<k}\bfnnn{\summand}{i_1\cdots i_{k-l}}+ \nbf{\summand}),  r= 0, s\neq 0\\
            \end{cases}
         \end{align}
        Now fix any subset $\{j_1\cdots j_l\}\subset \{i_1\cdots i_m\}$ just like in the case when $r=s=0$, the maps in Lemma $\ref{keylemma}$ vanishes unless $\{j_1\cdots j_l\}$ is one of the four cases we listed there. We verify the equality in $\ref{keylemma}$ for these 4 cases one by one.

        Case 0: $\emptyset\subset \{i_1\cdots i_m\}$. This is equivalent to $A_\infty$-relation of order $m+r+s+2$.

        Case 1: $\summandnew =i_1\cdots i_p\subsummandnew i_q \cdots i_m$. In this case, all maps in Lemma $\ref{keylemma}$ vanish unless $r=s=0$. This is because when $\{\subsummandnew\}$ is not empty, all maps with $r\neq 0$ and $s\neq 0$ vanish. But then neither (\ref{ev r1s}.2)$\circ$ (\ref{mcg r1s}.2) nor (\ref{mce r1s}.3)$\circ$ (\ref{ev r1s}.3) maps $\mcN^{\summandnew}$ to $\mcN^{\subsummandnew}$.

        Case 2: $\summandnew=j_1\cdots j_p i_r\cdots i_{r'} j_{p+1}\cdots j_{m'} i_{q}\cdots i_m$. In this case, all maps in Lemma $\ref{keylemma}$ vanish unless $r=s=0$. This is because $\bfnnn{\summand}{\msubsummand}$ are $0$ whenever $r>0$ or $s>0$

        Case 3: We have 
        \begin{flalign}
            &\sum_{j}\mu^{0|1|s-j}_{\mcG_n}\circ ev^{0|1|j}_n(b_s,\cdots ,b_{j+1})(b_j,\cdots,b_1)(x_m,\cdots,x_0)&&\\
            =&\sum_j\sum_{p+q=l}\bfnnn{i_1\cdots i_{m-l}}{i_1\cdots i_{m-p-q}}\circ \bfnnn{i_1\cdots i_m}{i_1\cdots i_{m-p}}(b_s,\cdots ,b_{j+1})(b_j,\cdots,b_1)(x_m,\cdots,x_0)&&\\
            =&\sum_j\sum_{p+q=l} (\mu_{s-j+q+1}(b_s,\cdots,b_{j+1},\mu_{j+p+1}(b_j,\cdots b_1,x_m,\cdots,x_{m-q})x_{m-p-1},\cdots , x_{m-p-q}),x_{m-p-q-1},\cdots, x_0)&&\label{case3r1s1}
        \end{flalign}
          Where the sum is over $p+q=l$ such that $i_{m-p-q}=j_{m'}$.
        \begin{align}
            &\sum_j ev_n^{0|1|s-j}\circ \mu_{\mcL_{n+1}}^{0|1|j}(b_s,\cdots ,b_{j+1})(b_j,\cdots,b_1)(x_m,\cdots,x_0)\\
            =&\sum_j \bfnnn{i_1\cdots i_m}{i_1\cdots i_{m-l}}\circ \bfnnn{i_1\cdots i_m}{i_1\cdots i_m}(b_s,\cdots ,b_{j+1})(b_j,\cdots,b_1)(x_m,\cdots,x_0)\\
            =&\sum_j (\mu_{s-j+l+1}(b_s,\cdots,b_{j+1},\mu_{j+1}(b_j,\cdots,b_1,x_m)),x_{m-1},\cdots,x_{m-l}),x_{m-l-1},\cdots,x_0)\label{case3r1s2}
        \end{align}

        \begin{align}
            &\sum_{i,j} ev_n^{0|1|s-j+1}(b_s,\cdots,\mu_j(b_{i+j},\cdots, b_{i+1}),b_i,\cdots,b_1)(x_m,\cdots,x_0)\\
            =&\sum_{i,j} (\mu_{s-j+l+2}(b_s,\cdots,\mu_j(b_{i+j},\cdots, b_{i+1}),b_i,\cdots,b_1,x_m,\cdots,x_{m-l}),x_{m-l-1},\cdots,x_0)\label{case3r1s3}
        \end{align}
        We see that (\ref{case3r1s1})+(\ref{case3r1s2})+(\ref{case3r1s3})=0 by applying the $A_\infty$ relations separately for each $l$. This finishes the proof for Lemma \ref{keylemma} and hence Theorem \ref{mainmain}.

\end{proof}

\begin{proof}[proof of theorem \ref{mainmain}]
    The exact triangle is obtained by using lemma \ref{changecone0} on the right-hand side of (\ref{beforechange}) as indicated. The statement about the differential follows from our formula for $ev_i$.
\end{proof}

\printbibliography

\end{document}